\newenvironment{Algorithm}[1][tbh]%
{\begin{myalgo}[#1]
\centering
\begin{minipage}{.65 \textwidth}
\begin{algorithm}[H]}%
{\end{algorithm}
\end{minipage}
\end{myalgo}}
\definecolor{darkred}{rgb}{0.5,0,0}
\definecolor{darkgreen}{rgb}{0, 0.3,0}
\definecolor{darkblue}{rgb}{0,0,0.6}
\definecolor{LightGray}{rgb}{.6,.6,.6}
\DeclareMathOperator{\Uniform}{Uniform}
\DeclareMathOperator{\Bernoulli}{Bernoulli}
\DeclareMathOperator{\Binomial}{Binomial}
\DeclareMathOperator{\MSE}{MSE}
\DeclareMathOperator{\MISE}{MISE}
\newcommand{\Bin}{\mathcal{B}}
\newcommand{\given}{\,|\,}
\newcommand{\fourEvent}[4]{{\mathfrak{F}_{#1, #2}(#3, #4)}}
\newcommand{\PP}{\ensuremath{\mathcal{P}}}
\newcommand{\QQ}{\ensuremath{\mathcal{Q}}}
\newcommand{\SSS}{\ensuremath{\mathcal{S}}}
\newcommand{\defn}[1]{\textbf{#1}}
\newcommand{\defas}{:=}
\newcommand{\st}{{\,:\,}}
\newcommand{\Nats}{\Naturals}
\newcommand{\eqdist}{\stackrel{\mathrm{d}}{=}}
\newcommand{\iid}{\stackrel{\text{iid}}{\sim}}
\newcommand{\ind}{\stackrel{\text{ind}}{\sim}}
\def\EM#1{\ensuremath{#1}}
\def\mbb#1{\EM{\mathbb{#1}}}
\def\Naturals{{\EM{{\mbb{N}}}}}
\newcommand{\Gd}{{G^\dagger}}
\newcommand{\GG}{\mathfrak{G}}
\newcommand{\G}{\mbb{G}}
\newcommand{\Expect}{\mbb{E}}
\newcommand{\MSEval}{\MSE}
\newcommand{\MSEfun}{\MISE}
\newcommand{\ER}{Erd\H{o}s--R\'{e}nyi}
\theoremstyle{plain}
\newtheorem{proposition}{Proposition}[section]
\newtheorem{lemma}[proposition]{Lemma}
\newtheorem{corollary}[proposition]{Corollary}
\newtheorem{theorem}[proposition]{Theorem}
\theoremstyle{definition}
\newtheorem{definition}[proposition]{Definition}
\theoremstyle{remark}
\renewcommand{\hat}{\widehat} 
\def\set#1{\{ #1 \}}
\def\norm#1{\mathopen\| #1 \mathclose\|}		
\newcommand{\zo}{\EM{\set{0,1}}}
\newcommand{\zov}{\zo-valued}
\begin{document}

\begin{frontmatter}
\title{An Iterative Step-Function Estimator for Graphons}

\begin{aug}
\author{\fnms{Diana Cai${}^{\dagger,\ddagger}$}\ead[label=e1]{dcai@post.harvard.edu}},
\author{\fnms{Nathanael Ackerman${}^\ddagger$}\ead[label=e2]{nate@math.harvard.edu}},
\and
\author{\fnms{Cameron Freer${}^{\dagger,\star}$}\ead[label=e3]{freer@mit.edu}}
\affiliation{\thanksref[$\dagger$]{gamalon}\!Gamalon,
	\thanksref[$\ddagger$]{harvard}\!Harvard University, and
	\thanksref[$\star$]{mit}\!Massachusetts Institute of Technology}
\end{aug}

\begin{abstract}
Exchangeable graphs arise via a sampling procedure
from measurable functions known as \emph{graphons}.
A natural estimation problem is how well we can recover a graphon
given a single graph sampled from it.
One general framework for estimating a graphon uses
step-functions obtained by partitioning the nodes of the graph according
to some clustering algorithm.
We propose an \emph{iterative step-function estimator} (ISFE) that, given an
initial partition, iteratively clusters nodes based on their edge densities
with respect to the previous iteration's partition.
We analyze ISFE and demonstrate its performance
in comparison with other graphon estimation techniques.
\end{abstract}

\begin{keyword}
\kwd{graphon}
\kwd{exchangeable graph}
\kwd{histogram estimator}
\kwd{network model}
\kwd{cut distance}
\end{keyword}

\end{frontmatter}


\section{Introduction}
\label{intro}

Latent variable models of graphs can be used to model hidden
structure in large networks and have been applied to a variety of problems such as
community detection \citep{MR1908073}
and link prediction \citep{DBLP:conf/nips/MillerGJ09}.
Furthermore, many graphs are naturally modeled as exchangeable
when the nodes have no particular ordering \citep{DBLP:dblp_conf/nips/Hoff07}.
Examples of exchangeable graph models include the stochastic block model (SBM)
\citep{MR718088} and its extensions \citep{C.Kemp:2006:53fd9},
latent feature models
\citep{DBLP:conf/nips/MillerGJ09,DBLP:conf/icml/PallaKG12},
and latent distance models \citep{MR1951262}.

Several key inference problems in exchangeable graph models
can be formulated in terms of estimating
symmetric measurable functions $W\colon [0,1]^2\rightarrow[0,1]$, known as
\emph{graphons}.
There is a natural sampling procedure that produces an exchangeable (undirected)
random graph from a graphon $W$ by first sampling
a countably infinite set of independent uniform random variables
$\{U_i\}_{i\in\Nats}$, and then sampling an edge between
every pair of distinct vertices $i$ and $j$ according to an independent
Bernoulli random variable with weight
$W(U_i, U_j)$.
In the case where the graphon is constant or piecewise constant with a
finite number of pieces, this procedure recovers the standard notions of
\ER\ graphs and stochastic block models, respectively. But this procedure is
much more general; indeed,
\citet{MR637937} and \citet{Hoover79} showed, via what can be viewed as a
higher-dimensional analogue of de~Finetti's theorem,
that the distribution of any exchangeable graph is some mixture of such
sampling procedures from graphons.

Graphon estimation has been studied in two contexts:
(1) \emph{graphon function estimation}
\citep{2013arXiv1309.5936W, MR3161460},
where we are concerned with
inverting the entire sampling procedure to recover a measurable
function from a single sampled graph,
and
(2) \emph{graphon value estimation},
where we are interested in inverting just the second step of the sampling procedure,
to obtain estimates of the latent values $W(U_i, U_j)$ from a single graph
\citep{2012arXiv1212.1247C,
2014arXiv1410.5837G}
(or several \citep{DBLP:conf/nips/AiroldiCC13}) sampled using the sequence
$\{U_i\}_{i\in\Nats}$.

Graphons are well-approximated by step-functions in the \emph{cut distance}
\citep{MR1674741, MR1723039, MR3012035}, a notion of distance between graphs
that extends to graphons,
which we describe in Section \ref{background}.
Although the topology on the collection of graphons induced by the cut distance is coarser than
that induced by $L^2$ (as used in MSE and MISE risk), two graphons are close in
the cut distance precisely when their random samples (after reordering) differ
by a small fraction of edges.
Hence it is natural to consider graphon estimators that produce step-functions;
this has been extensively studied with the stochastic block model.

A standard approach to approximating graphons using step-functions is to first
partition the vertices of the sampled graph $G$ and then return the
step-function graphon determined by the average edge
densities in $G$ between classes of the partition.
In this way, every clustering algorithm can be seen to induce a graphon
estimation procedure (Section~\ref{clustering}).
While many clustering algorithms thereby give rise to
tractable graphon estimators, one challenge is to produce clustering algorithms that
induce good estimators. In this paper, we introduce a method, motivated by the
cut distance, that takes a vertex partition and produces another partition that
yields an improved graphon estimate. By iterating this method, even better
estimates can be obtained. We describe and analyze the graphon estimator that
results from this iterative procedure applied to the result of a clustering
algorithm.

\subsection{Contributions}
We propose
iterative step-function estimation (ISFE),
a computationally tractable graphon estimation procedure motivated by
the goal, suggested by the Frieze--Kannan weak regularity lemma, of
finding a partition that induces a step-function estimate close in cut distance
to the original graphon.
ISFE iteratively improves a partition of the vertices of the sampled
graph by considering the
average edge densities between each vertex and each of the classes of the existing
partition
(Section \ref{isfe}).

We analyze a variant of ISFE on graphs sampled from a $2$-step stochastic block model,
and demonstrate a sense in which ISFE correctly classifies an arbitrarily large fraction of the vertices, as the number of vertices of the sampled graph and number of classes in the partition increase (Section~\ref{isfe-analysis}).

Finally, we evaluate our graphon estimation method on data sampled from several
graphons,
comparing ISFE against several other graphon estimation methods
(Section~\ref{evaluation}).
ISFE quickly recovers detailed structure in samples from graphons
having block structure, while still performing
competitively with other tractable graphon estimators on various classes of
continuous graphons, while making fewer structural assumptions.

\section{Background and related work}
\label{background}

Throughout this paper, graphs are undirected and simple; we consider
sequences of graphs that are \emph{dense}, in that a graph with $n$ vertices
has $\Omega(n^2)$ edges.
For natural numbers $n\ge 1$, we define a \emph{graph on $[n]$} to be a
graph with set of vertices
$[n] \defas \{1, \ldots, n\}$;
its adjacency matrix is the $\{0,1\}$-valued $n\times n$ matrix
$(G_{ij})_{i,j\in[n]}$, where $G_{ij} = 1$ iff $G$ has an edge between
vertices $i$ and $j$. Graphs on $\Nats$, and their adjacency matrices, are
defined similarly.
We write $x \eqdist y$ when two random variables $x$ and $y$ are equal in
distribution, and abbreviate \emph{almost surely} and \emph{almost everywhere} by
a.s.\ and a.e., respectively.

\subsection{Graphons} For detailed background on graphons and the relationship
between graphons and exchangeable graphs, see
the book by \citet{MR3012035} and surveys by \citet{MR2463439} and \citet{MR2426176}.
Here we briefly present the key facts that we will use.

A random graph $G$ on $\Nats$ is \defn{exchangeable} when its distribution is
invariant under arbitrary permutations of $\Nats$. In particular, if such a
graph is not a.s.\ empty, then the marginal probability of an edge between any
two vertices is positive.

\begin{definition}
A random \zov\ array $(A_{ij})_{i,j\in \Nats}$ is (jointly) \defn{exchangeable} when
\begin{align}
    (A_{ij}) \eqdist (A_{\sigma(i), \sigma(j)})
\end{align}
for every permutation $\sigma$ of $\Nats$.
\end{definition}
Note that a random graph on $\Nats$ is exchangeable precisely when its adjacency
matrix is jointly exchangeable.
We now define a sampling procedure that produces exchangeable graphs.

\begin{definition}
A \defn{graphon} $W$ is a symmetric measurable function $[0,1]^2 \to [0,1]$.
\end{definition}

A graphon can be thought of as a continuum-sized, edge-weighted graph.
We can sample from a graphon in the following way.

\begin{definition}
Let $W$ be a graphon.
The \defn{$W$-random graph} on $\Nats$, written $\G(\Nats, W)$, has adjacency
matrix $(G_{ij})_{i, j\in\Nats}$ given by the following sampling procedure:
\begin{align}
\begin{split}
    U_i    & \iid \Uniform[0, 1] \\
    G_{ij} \ | \ U_i, U_j & \ind \Bernoulli(W(U_i, U_j))  \textrm{~for~} i<j
\end{split}
\label{graphon-sample}
\end{align}
For $n\ge 1$, the random graph $\G(n, W)$ on $[n]$ is formed similarly.
\end{definition}
Every $W$-random graph is exchangeable, as is any mixture of $W$-random graphs.
Conversely, the following statement is implied by the Aldous--Hoover theorem, a
two-dimensional generalization of
de~Finetti's theorem, which characterizes exchangeable sequences as mixtures of i.i.d.\ sequences.

\begin{theorem}[{\citet{MR637937,Hoover79}}]
\label{aldous-hoover}
Suppose $G$ is an exchangeable graph on $\Nats$. Then $G$ can be written as the
mixture of $W$-random graphs $\G(\Nats, W)$  for some probability measure on graphons $W$.
\end{theorem}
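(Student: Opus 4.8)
The plan is to deduce this graph-specific statement from the general Aldous--Hoover representation of jointly exchangeable $\zov$ arrays, and then to carry out the reduction that converts that representation into a mixture of $W$-random graphs. Recall that the general representation asserts the existence of a measurable function $f$ and independent $\Uniform[0,1]$ random variables $\alpha$, $(U_i)_{i\in\Nats}$, and $(U_{\{i,j\}})_{i<j}$ --- a single global variable, one variable per vertex, and one symmetric variable per pair --- such that
\begin{align}
(G_{ij}) \eqdist \bigl(f(\alpha, U_i, U_j, U_{\{i,j\}})\bigr),
\end{align}
where, since $G$ is a graph, $f$ takes values in $\{0,1\}$ and may be taken symmetric in its two middle arguments. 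First I would take this representation as given and show how the graphon emerges.

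Given the representation, the key observation is that the pair-level noise $U_{\{i,j\}}$ can be integrated out. Define
\begin{align}
W_\alpha(x,y) \defas \Prob\bigl[f(\alpha, x, y, U) = 1\bigr],
\end{align}
where $U \sim \Uniform[0,1]$ is independent of everything else; symmetry of $f$ makes $W_\alpha$ a graphon. Conditionally on $\alpha$ and on $(U_i)_{i\in\Nats}$, the entries $G_{ij}$ for $i<j$ are independent, and each is $\Bernoulli(W_\alpha(U_i, U_j))$, because the only remaining randomness in $G_{ij}$ is the independent variable $U_{\{i,j\}}$. This is exactly the sampling procedure (\ref{graphon-sample}) for the $W_\alpha$-random graph. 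Hence, conditionally on $\alpha$, the law of $G$ is that of $\G(\Nats, W_\alpha)$. Pushing the law of $\alpha$ forward along $\alpha \mapsto W_\alpha$ yields a probability measure on graphons, and $G$ is the corresponding mixture of $W$-random graphs, as claimed.

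The substance of the argument is therefore the general representation itself, and this is where I expect the main obstacle to lie. The route I would take is the ergodic decomposition: the exchangeable graph distributions form a convex, weakly compact set whose extreme points are the ergodic ones, so every exchangeable $G$ is a barycenter of ergodic distributions, and this barycenter furnishes the mixing measure. It then suffices to represent a single ergodic exchangeable graph as $\G(\Nats, W)$ for a fixed $W$ (with the global $\alpha$ degenerate). The hard part is constructing the latent vertex types $U_i$ from the graph alone: one builds each $U_i$ from the limiting local statistics of vertex $i$ --- its asymptotic edge densities against i.i.d.\ reference vertices, refined enough to capture the full conditional law --- and then must verify that, conditionally on these types, the edges are independent Bernoulli variables with the correct weights. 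Establishing this conditional-independence structure, together with the attendant measurability of the map to graphons (which is only well defined up to measure-preserving rearrangements of $[0,1]$), is the technical core and the step I would expect to be most delicate.
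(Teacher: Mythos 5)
The paper does not prove this statement: it records it as a cited classical theorem of Aldous and Hoover, remarking only that the graph version ``is implied by'' the general Aldous--Hoover representation of exchangeable arrays. So there is no in-paper proof to compare yours against; what can be judged is whether your derivation is sound. Its first half is: your reduction from the general representation $(G_{ij}) \eqdist \bigl(f(\alpha, U_i, U_j, U_{\{i,j\}})\bigr)$ to the mixture statement is correct and is the standard argument. Defining $W_\alpha(x,y) = \Prob\bigl[f(\alpha,x,y,U)=1\bigr]$, observing that conditionally on $\alpha$ and $(U_i)_{i\in\Nats}$ the off-diagonal entries are independent $\Bernoulli(W_\alpha(U_i,U_j))$ variables, and pushing the law of $\alpha$ forward along $\alpha \mapsto W_\alpha$ is exactly how one passes from the array representation to a mixture of $W$-random graphs, matching the sampling procedure in Equation~\eqref{graphon-sample}. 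Two details you should make explicit: $f$ must be taken symmetric in its two middle arguments (and measurable jointly) so that $W_\alpha$ is a genuine graphon, and the map $\alpha \mapsto W_\alpha$ must be shown measurable into a suitable standard Borel space of graphons for the pushforward to define a probability measure on graphons.

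The genuine gap is the second half. All the substance of the theorem lives in the general representation you ``take as given,'' and your sketch of its proof --- ergodic decomposition of the convex set of exchangeable laws, then construction of vertex types for an ergodic law from limiting local statistics, then verification of conditional independence --- is a roadmap, not an argument. The Choquet/extreme-point step, the construction of the $U_i$ from the graph alone, and above all the proof that edges become independent Bernoulli variables given the types are precisely where the proofs of Aldous, Hoover, and Kallenberg (and the later Lov\'asz--Szegedy and Diaconis--Janson graph-limit treatments) expend their effort; none of it is routine, and you yourself flag it as the delicate core without supplying it. As written, your proposal establishes ``the general Aldous--Hoover representation implies the graphon mixture statement,'' which is a true and correctly executed reduction, but it does not prove the theorem --- it relocates it. Since the paper likewise relocates the whole statement to the cited literature, this is not a defect relative to the paper, but your write-up should present itself as a reduction to the array representation rather than as a proof.
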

The Aldous--Hoover representation has since been extended to higher dimensions,
more general spaces of random
variables, and weaker notions of symmetry; for a detailed presentation,
see \citet{MR2161313}.

Since every exchangeable graph is a mixture of graphon sampling procedures,
many network models can be described in this way
\citep{DBLP:dblp_conf/nips/Hoff07}.
The stochastic block model \citep{MR718088} is such an example, as explored further by
\citet{Bickel15122009} and others;
it plays a special role as one of the simplest models that can approximate
arbitrary graphon sampling procedures.
Some Bayesian nonparametric models, including the
eigenmodel \citep{DBLP:dblp_conf/nips/Hoff07},
Mondrian process graph model \citep{NIPS2008_3622}, and
random function model \citep{DBLP:conf/nips/LloydOGR12} were built knowing
the Aldous--Hoover representation.
Furthermore, many other such models are
naturally expressed in terms of a distribution on graphons
\citep{DBLP:journals/pami/OrbR14, DBLP:conf/nips/LloydOGR12},
including
the infinite relational model (IRM) \citep{C.Kemp:2006:53fd9}
the latent feature relational model (LFRM) \citep{DBLP:conf/nips/MillerGJ09}, and
the infinite latent attribute model (ILA) \citep{DBLP:conf/icml/PallaKG12}.

Two different graphons $W_0, W_1$ can give rise to the same distribution on graphs,
in which case we say that $W_0$ and $W_1$ are \defn{weakly isomorphic}.
For example, modifying a graphon on a measure zero subset does not change the
distribution on graphs. Moreover, applying a measure-preserving transformation
to the unit interval, before sampling the graphon,
leaves the distribution on graphs unchanged. The following is a
consequence of Proposition 7.10 and Equation (10.3) of \citet{MR3012035}.

\begin{proposition}
Let $\varphi: [0,1] \rightarrow [0,1]$ be a measure-preserving transformation,
i.e., a map such that
$\varphi(U)$ is
uniformly distributed
for $U\sim \Uniform[0,1]$.
Then the graphon $W^\varphi$ defined by
    $W^{\varphi}(x,y) = W(\varphi(x), \varphi(y))$
is weakly isomorphic to $W$.
\end{proposition}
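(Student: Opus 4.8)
The plan is to unwind the definition of weak isomorphism: $W^\varphi$ is weakly isomorphic to $W$ precisely when the random graphs $\G(\Nats, W)$ and $\G(\Nats, W^\varphi)$ have the same distribution, equivalently when their adjacency matrices agree in distribution. So I would reduce the claim to exhibiting a coupling of the two sampling procedures under which the resulting adjacency matrices coincide, rather than trying to compare the graphons pointwise.

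First I would check that $W^\varphi$ is actually a graphon. Symmetry is inherited from $W$, since $W^\varphi(x,y) = W(\varphi(x),\varphi(y)) = W(\varphi(y),\varphi(x)) = W^\varphi(y,x)$; measurability follows because $\varphi$ is measurable (being measure-preserving) and $W$ is measurable, so the composition $(x,y)\mapsto W(\varphi(x),\varphi(y))$ is a measurable function into $[0,1]$.

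Next comes the heart of the argument. I would sample $V_i \iid \Uniform[0,1]$ as in the definition of $\G(\Nats, W^\varphi)$ and set $U_i \defas \varphi(V_i)$. Since $\varphi$ is measure-preserving, each $U_i$ is $\Uniform[0,1]$; and because the $V_i$ are independent and each $U_i$ is a deterministic measurable function of $V_i$ alone, the $U_i$ are independent as well. Hence $(U_i)_{i\in\Nats}$ has exactly the law of the latent sequence used to generate $\G(\Nats, W)$. Conditioned on $(V_i)_{i\in\Nats}$, the edges of $\G(\Nats, W^\varphi)$ are independent with $\Prob(G_{ij}=1 \given V_i, V_j) = W^\varphi(V_i,V_j) = W(U_i, U_j)$, which is precisely the conditional edge law of $\G(\Nats, W)$ given $(U_i)_{i\in\Nats}$. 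Marginalizing over the latent variables, the two adjacency matrices agree in distribution, so $W$ and $W^\varphi$ are weakly isomorphic.

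The point I would flag as the main subtlety is that a measure-preserving $\varphi$ need not be a bijection, so one cannot argue by a naive change of variables $x = \varphi^{-1}(u)$ inside the integral defining the edge probabilities. The coupling above sidesteps this: it uses only that the pushforward of Lebesgue measure under $\varphi$ is again Lebesgue measure, together with independence of the latent draws, and never invokes an inverse of $\varphi$.
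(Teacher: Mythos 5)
Your proof is correct, but it takes a genuinely different route from the paper, which in fact gives no self-contained argument at all: the proposition is stated as ``a consequence of Proposition 7.10 and Equation (10.3) of \citet{MR3012035}.'' That citation route goes through homomorphism densities --- Proposition 7.10 of \citet{MR3012035} shows that composing with a measure-preserving map leaves all homomorphism densities $t(F,W)$ unchanged, and Equation (10.3) ties the distribution of $W$-random graphs to those densities, so equality of densities yields equality of the induced graph distributions. You instead work directly with the definition of weak isomorphism used in the paper (equality of the distributions of $\G(\Nats,W)$ and $\G(\Nats,W^\varphi)$) via a coupling of the two sampling procedures: draw $V_i \iid \Uniform[0,1]$, set $U_i = \varphi(V_i)$, observe that $(U_i)$ is again i.i.d.\ uniform, and note that the conditional edge law given the latents factors through $(U_i)$, so marginalizing gives equality in distribution of the adjacency matrices. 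Your argument is more elementary and self-contained --- it uses only the pushforward property and independence, and as you rightly flag, it never needs $\varphi$ to be invertible --- whereas the paper's route leans on the (deeper, black-boxed) equivalence between subgraph densities and sampling distributions, which is machinery the graphon literature already has on the shelf and which generalizes immediately to related invariance statements. Both are sound; yours is the one a reader could verify from the definitions in this paper alone.
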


Thus, the graphon from which an exchangeable graph is sampled is
non-identifiable; see \citet[{\S III.D}]{DBLP:journals/pami/OrbR14}.
Such measure-preserving transformations are essentially the only freedom allowed.
Hence the appropriate object to estimate is a graphon
\emph{up to weak isomorphism}.
\begin{theorem}[\cite{Hoover79}]
\label{hoover}
If $W_0$ and $W_1$ are weakly isomorphic, then there are measure-preserving
transformations $\varphi_0, \varphi_1$ and a graphon $V$ such that
$W_0^{\varphi_0} = W_1^{\varphi_1} = V$ a.e.
\end{theorem}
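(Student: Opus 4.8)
The plan is to reduce weak isomorphism to equality of all finite-graph sampling probabilities, and then to realize that equality geometrically by pulling both graphons back to a common ``twin-free'' representative. First I would record the elementary direction: for a graphon $W$ and a simple graph $H$ on $[n]$, the probability that $\G(n,W)$ equals $H$ is
\[
    \int_{[0,1]^n} \prod_{\{i,j\}\in E(H)} W(x_i,x_j)\prod_{\{i,j\}\notin E(H)}\bigl(1-W(x_i,x_j)\bigr)\,dx_1\cdots dx_n,
\]
and these induced-subgraph functionals determine, and are determined by, the homomorphism densities $t(F,W)$ via inclusion--exclusion. Hence $W_0$ and $W_1$ are weakly isomorphic exactly when $t(F,W_0)=t(F,W_1)$ for every finite simple graph $F$, and the problem becomes: matching densities force a common pullback.

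Next I would introduce the twin equivalence on $([0,1],\lambda)$, declaring $x\sim x'$ when the neighborhood functions $W(x,\cdot)$ and $W(x',\cdot)$ agree a.e.\ (and, more stringently, when all iterated neighborhood functionals agree). Quotienting by $\sim$ produces a standard probability space on which $W$ descends to a twin-free graphon $\hat W$; matching the positive-mass twin classes with corresponding pieces and invoking the Borel isomorphism theorem on the remainder yields a measure-preserving bijection (mod $0$) $\varphi$ with $W^{\varphi}=\hat W$ a.e. The measure-theoretic bookkeeping here --- that the quotient is standard, that $\hat W$ is well defined and genuinely twin-free, and that the masses match so a bijection exists --- is routine but is where the care lies.

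The heart of the argument is uniqueness of the twin-free representative: two twin-free graphons with identical homomorphism densities coincide up to a measure-preserving bijection (mod $0$). I would establish this by a refinement/martingale argument. The densities pin down, layer by layer, the joint law of the neighborhood statistics, so one builds a coherent sequence of ever-finer measurable partitions of the two spaces that are matched by the moment data, and passes to the limit --- using martingale convergence of the conditional expectations $\Expect[W\mid\mathcal{F}_n]$ and twin-freeness to ensure the limiting $\sigma$-algebra separates points --- to extract the identifying bijection. This is the main obstacle, since it is exactly where Hoover's theorem does its real work and where the hypothesis of matching densities is fully consumed.

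Finally I would assemble the statement. The previous step supplies a measure-preserving bijection identifying the twin-free representatives of $W_0$ and $W_1$; take $V$ to be this common twin-free graphon. Composing the maps of the second step with this identification as needed produces measure-preserving $\varphi_0,\varphi_1$ with $W_0^{\varphi_0}=W_1^{\varphi_1}=V$ a.e., as required.
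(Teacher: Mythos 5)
The paper itself does not prove this statement: it is quoted with a citation to Hoover (1979), and the modern reference is Theorem~13.10 of \citet{MR3012035} together with the Borgs--Chayes--\Lov{}\'asz uniqueness theorem for twin-free graphons. Your overall strategy --- weak isomorphism equals equality of homomorphism densities via inclusion--exclusion, pass to twin-free quotients, prove uniqueness of the twin-free representative, assemble --- is exactly that standard route, and your sketch of the uniqueness step points at the right argument (it remains a sketch, as you acknowledge, and that is where essentially all of the work lives).

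There is, however, a genuine error in your second step, and it breaks your final assembly. You claim the quotient construction ``yields a measure-preserving bijection (mod $0$) $\varphi$ with $W^{\varphi}=\hat W$ a.e.'' The true relation goes the other way: writing $q$ for the quotient map (with the quotient space realized as $[0,1]$ by spreading its atoms into intervals), one has $W=\hat W^{q}$ a.e., i.e., the \emph{original} graphon is a pullback of its quotient; a measure-preserving map in the direction you assert need not exist at all. Concretely, take $W(x,y)=f(x)f(y)$ with $f(x)=2x \bmod 1$. Its twin classes are the pairs $\{t/2,(t+1)/2\}$, and its twin-free quotient is $\hat W(s,t)=st$. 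If some measure-preserving $\varphi$ satisfied $W^{\varphi}=\hat W$ a.e., then $g(x)g(y)=xy$ a.e.\ where $g=f\circ\varphi$; this forces $g(y)=cy$ a.e., and $g$ measure-preserving forces $c=1$, so $f\circ\varphi=\mathrm{id}$ a.e. Setting $B_1=\{x \st \varphi(x)<1/2\}$, measure preservation gives $\lambda(B_1\cap E)=\lambda(E)/2$ for every Borel $E\subseteq[0,1)$, and $E=B_1$ yields $\lambda(B_1)=0$, contradicting $\lambda(B_1)=1/2$. So no such $\varphi$ exists, and the same example shows why: the $V$ in the theorem is a common \emph{refinement} (a pullback of each $W_i$), not the common quotient, so ``composing the maps of the second step'' cannot produce it. The repair is a coupling argument: with $W_0=\hat W^{q_0}$ and $W_1=\hat W^{q_1}$ (after identifying the two quotients by your uniqueness step), form the relatively independent coupling $X=\{(a,b)\in[0,1]^2 \st q_0(a)=q_1(b)\}$ over the quotient space. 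This is an atomless standard probability space (fibers over atoms of the quotient are atomless because Lebesgue measure is), hence isomorphic mod $0$ to $[0,1]$; letting $\varphi_0,\varphi_1$ be the two coordinate projections composed with this isomorphism gives $q_0\circ\varphi_0=q_1\circ\varphi_1=:\pi$ a.e., and therefore $W_0^{\varphi_0}=W_1^{\varphi_1}=\hat W^{\pi}=:V$ a.e., as required.
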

As a result of Theorem \ref{hoover}, when considering the problem of estimating
a graphon, we only ask to recover the graphon up to a measure-preserving
transformation; this is analogous to a key aspect of the definitions of
\emph{cut distance} and of \emph{$L^2$ distance} between graphons, which we
describe in Appendix~\ref{appendix-risk}.

\subsection{The graphon estimation problems}

Given a graph with adjacency matrix $(G_{ij})$ sampled according
to Equation~\eqref{graphon-sample}, there are two natural ways one may
seek to invert this sampling procedure.
Here we consider two distinct graphon estimation problems that correspond to
inverting one or both of the sampling steps.
The ``graphon value estimation problem'' aims to invert the second step
of the sampling procedure, and hence can be thought of as finding the local
underlying structure of a graph sampled from a graphon (without concluding
anything about the graphon at any location not involved in the sample).
Suppose we sample the $W$-random graph $\G(n, W)$ using
$\{U_i\}_{i\in [n]}$ as in Equation~\eqref{graphon-sample}.
Graphon value estimation consists of giving an estimator
$\hat{M} \defas (\hat{M}_{ij})_{i,j \in [n]}$ for the matrix
$M \defas (M_{ij})_{i,j\in [n]}$ where each $M_{ij} \defas W(U_i, U_j)$.
One measure of success for the graphon value estimation problem
is given by the mean squared error:
\begin{align}
\label{mse-eq}
\MSEval(\hat{M}) \defas
\Expect \
\Bigl(\frac1{n^2} \sum_{i=1}^n \sum_{j=1}^n
\bigl (M_{ij} -\hat{M}_{ij} \bigr)^2 \Bigr),
\end{align}
as used by \citet{2012arXiv1212.1247C}
and \citet{2014arXiv1410.5837G} (see also
\citet{DBLP:conf/nips/AiroldiCC13}).
Whereas MSE in nonparametric function estimation is typically with respect to
particular points of the domain (see, e.g., \citet[\S1.2.1]{MR2724359}), here
the random sequence $\{U_i\}_{i\in[n]}$ is latent, and so
we take the expectation also with respect to the randomness in the terms $U_i$
(and hence in the terms $M_{ij}$), following
\citet[\S2.6]{2012arXiv1212.1247C}.

The ``graphon function estimation problem'' aims to invert the entire sampling
procedure to recover a graphon (i.e., symmetric measurable function).
A notion of success for the graphon function estimator problem, used by
\citet{2013arXiv1309.5936W}, \citet{MR3161460}, and \citet{Olhede01102014},
is given by the mean integrated squared error for an estimator $\hat{W}$ of a graphon $W$:
\begin{align*}
\MSEfun(\hat{W}) &\defas
\Expect \
\inf_\varphi \ \norm{W - \hat{W}^\varphi}_2
\\
&=
\Expect \
\inf_\varphi \int_{[0,1]^2}
\bigl(W(x,y) - \hat{W}^\varphi(x,y) \bigr) ^2\, dx\, dy,
\end{align*}
where $\varphi$ ranges over measure-preserving transformations of $[0,1]$.
However, as we describe in
Appendix~\ref{appendix-l2-cut},
there are graphons $W$ and $V$ such that the random graphs $\G(\Nats, W)$ and
$\G(\Nats, V)$ are close in distribution, but $W$ and $V$ are far in
$L^2$ distance.
An alternative global notion of success for the function estimation problem is
to use the distribution of such random graphs directly
\citep{MR1702867}, or to use the cut distance, defined in terms of the cut along
which two graphs differ the most in their edge densities,
which also captures this notion of subsamples being close in distribution;
see
Appendix~\ref{appendix-l2-cut}.

The distinction between these two problems is analogous to the typical distinction between MSE and MISE in nonparametric function estimation \citep{MR2724359};
see also the two estimation problems in \citet[\S2.1]{DBLP:conf/aistats/YangHA14}.

In general, it is impossible to recover a measurable function from its values
at a countable number of points. However, if we assume that the measurable
function has specific structure (e.g., is continuous, Lipschitz, a step-function, etc.), then it may become
possible.
As a result, many graphon estimation methods, which we describe below,
require the graphon to have a representation of a certain form.
However, the problem of recovering a real-valued function from its values at a
random set of inputs, under various assumptions on the function, may be treated
separately from the estimation of these values.
Hence in this paper, while we illustrate the step-function graphon provided by
ISFE, we evaluate its graphon value estimate using MSE.

\subsection{Graphon estimation methods}

The first study of graphon estimation was by \citet{MR1702867} in the more
general context of exchangeable arrays.  This work predates the development of
the theory of graphons; for details, see
\citet[\S V]{DBLP:journals/pami/OrbR14}.

A number of graphon estimators have been proposed in recent years.
Here we mention several that are most closely related to our approach.
The stochastic block model approximation (SBA) \citep{DBLP:conf/nips/AiroldiCC13}
requires multiple samples on the same vertex set, but is similar to our
approach in some respects, as it partitions the vertex set according
to the $L^2$ metric on their edge vectors (in essence, the vector of average
edge densities with respect to the discrete partition).
Sorting and smoothing (SAS)
\citep{DBLP:conf/icml/ChanA14} takes a different approach to providing a
computational tractable estimator, requiring the graphon to have absolutely
continuous degree distribution.

Several estimators use spectral methods, including universal singular value
thresholding (USVT) \citep{2012arXiv1212.1247C}.
Rather than estimating a specific cluster and using this to define a step-function,
\citet{2014arXiv1406.5647A}
first estimate a co-cluster matrix and then obtain a graphon estimate from this
matrix by using eigenvalue truncation and $k$-means.

Other recent work in graphon estimation has focused on minimax optimality,
histogram bin width, estimation using moments, or consequences of the graphon
satisfying certain Lipschitz or H\"older conditions
\citep{DBLP:conf/aistats/YangHA14,
2013arXiv1309.5936W,
Olhede01102014,
Bickel15122009,
MR2906868,
MR3161460}.

The estimation problem for latent space models can also be seen as graphon
estimation, as such models are equivalent to graphon sampling procedures for graphons having nicer properties
than mere measurability
\citep[\S2.4]{2012arXiv1212.1247C}.

Many of the above graphon estimators are formulated in the setting of bipartite
graphs and \emph{separate} exchangeability, where the distribution
is invariant under separate permutations of the rows and columns.
For notational simplicity, we focus on the case of arbitrary undirected graphs,
whose adjacency matrices are symmetric, and for which joint exchangeability is
the appropriate notion, but many of our results have straightforward analogues
for bipartite graphs.

\section{Iterative step-function estimation}
\label{isfe}

We first discuss how a partition of a finite graph's vertex set induces a
step-function graphon and how clustering algorithms produce step-function
graphon estimators.
Next we propose iterative step-function estimation (ISFE), an approach to
iteratively improving such estimates by forming a new partition whose classes
contain vertices that have similar edge densities with respect to the old partition.

\subsection{Step-function estimators for graphons}
\label{stepfunction-estimators}

A step-function graphon can be associated with any finite graph given a
partition of its vertices.
Our presentation largely follows \S7.1 and \S9.2 of \citet{MR3012035}, with
modified notation.

A graphon $V$ is called a \defn{step-function} when there is a partition
$\SSS = \{S_1, \ldots, S_k\}$ of $[0,1]$ into finitely many measurable
pieces, called \defn{steps}, such that $V$ is constant on each set $S_i \times S_j$.
Suppose $H$ is a vertex-weighted, edge-weighted graph on $[n]$, with vertex
weights $\alpha_i$
and edge-weights $\beta_{ij}$ for $i,j\in[n]$. Then the
\defn{step-function graphon} $W_H$ associated with $H$ is defined by
$W_H(x, y) = \beta_{ij}$ for $x \in J_i$ and $y \in J_j$, where the steps
$J_1, \ldots, J_n$ form a partition of $[0,1]$ into consecutive intervals of
size
$\frac{\alpha_i}{\sum_{j\in[n]} \alpha_j}$
for $i\in [n]$.
(Consider an unweighted finite graph $G$ to be the weighted graph with vertex
weights $\alpha_i = 1$ and edge weights $\beta_{ij} = G_{ij}$.)

Given a graph $G$ on $[n]$ and vertex sets $X,Y \subseteq [n]$, write
$c_G(X,Y) \defas \sum_{i\in X} \sum_{j\in Y} G_{ij}$ for the number of edges
across the cut $(X, Y)$.
Then the \defn{edge density} in $G$ between $X$ and $Y$ is defined to be
	$e_G(X, Y) \defas  \dfrac{c_G(X, Y)}{|X| \, |Y|};$
when $X$ and $Y$ are disjoint, this quantity is the fraction of possible edges
between $X$ and $Y$ that $G$ contains.

Now suppose $G$ is a graph on $[n]$ and
$\PP = \{P_1 , \ldots , P_k\}$ is a partition of the vertices of $G$ into
$k$ classes.
The \defn{quotient graph} $G/\PP$ is defined to be the weighted graph on $[k]$
with respective vertex weights $|P_i|/n$ and edge weights
$\frac{e_G(P_i, P_j)}{|P_i| \, |P_j|}$.
For our estimation procedure, we will routinely pass from a sampled graph $G$
and a partition $\PP$ of its vertex set to the graphon $W_{G/\PP}$ formed from
the quotient $G/\PP$.

One may similarly define the step-function graphon $V_\SSS$ of $V$ with respect
to a measurable partition $\SSS = \{S_1, \ldots, S_k\}$ of $[0,1]$ as the
step-function graphon of the weighted graph with each vertex weight $\alpha_i$
equal to the measure of $S_i$ and edge weight
$\beta_{ij} = \int_{S_i \times S_j} V(x,y)\, dx\, dy$.

The Frieze--Kannan weak regularity lemma \citep{MR1674741,MR1723039} implies that
every graphon is well-approximated in the cut distance by such step-functions
formed from measurable partitions;
moreover, a bound on the quality of such an approximation is determined by the
number of classes in the
partition, uniformly in the choice of graphon.
For further details, see
Appendix~\ref{appendix-cut}.

\subsection{Graphon estimation via clustering}
\label{clustering}

The partition $\PP$ of a finite graph $G$ described in
Section~\ref{stepfunction-estimators},
which the step-function $W_{G/\PP}$ utilizes, can be formed by
clustering the nodes using some general clustering method, such as
$k$-means \citep{MR0214227},
hierarchical agglomerative clustering \citep{MR0148188}, random assignment, or
simpler clusterings, such as the trivial partition, in which
all vertices are assigned to a single class, or the discrete partition, in
which all vertices are in separate classes.

In Figure~\ref{cluster-fig}, we display the result of estimating a graphon
according to several clustering algorithms.
In all graphon figures, we use a grayscale gradient for values on $[0,1]$,
where darker values are closer to $1$.

Within the graphon estimator literature, several techniques produce
step-functions, but the analysis has generally focused on the choice of
partition size \citep{Olhede01102014} or on the convergence rates for optimal
partitions \citep{2013arXiv1309.5936W, MR3161460, 2014arXiv1410.5837G},
or else the technique requires multiple observations \citep{DBLP:conf/nips/AiroldiCC13}.
Here we aim to exploit structural aspects of graphs, such weak regularity
(i.e., their uniform approximability in the cut distance), via an algorithm for
forming a new partition that improves the step-function estimate $W_{G/\PP}$
produced by any given partition $\PP$.

\begin{figure}[t]
\centering
    \begin{subfigure}[b]{0.18\textwidth}
        \centering
        \includegraphics[scale=.65]{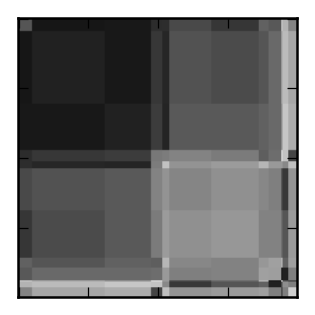}
        \caption{ISFE}
    \end{subfigure}
    \begin{subfigure}[b]{0.18\textwidth}
        \centering
        \includegraphics[scale=.65]{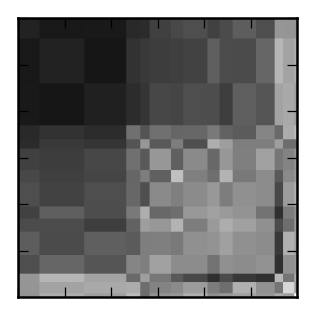}
        \caption{$k$-means}
    \end{subfigure}
    \begin{subfigure}[b]{0.18\textwidth}
        \centering
        \includegraphics[scale=.65]{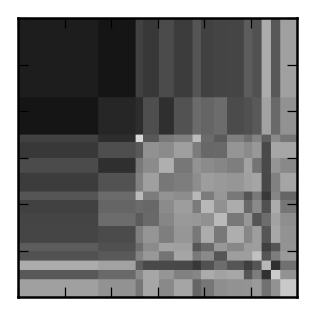}
        \caption{Agglomerative}
    \end{subfigure}
    \begin{subfigure}[b]{0.18\textwidth}
        \centering
        \includegraphics[scale=.65]{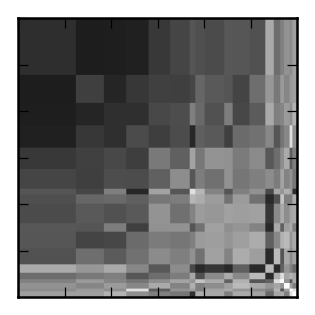}
        \caption{Affinity}
    \end{subfigure}
    \begin{subfigure}[b]{0.18\textwidth}
        \centering
        \includegraphics[scale=.65]{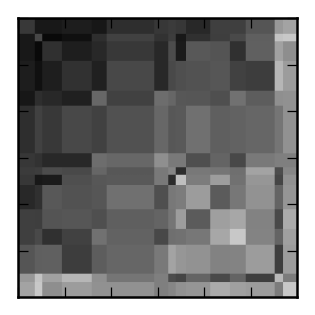}
        \caption{Spectral}
    \end{subfigure}
   \caption{Comparison of the step-function graphons obtained using
	various clustering algorithms on an infinite relational model graphon.
    Columns: (1)
    ISFE was applied to the trivial partition, where all vertices were
    initially assigned to a single bin.
    Clustering was performed
    using the Python package scikit-learn \citep{MR2854348} clustering defaults for
    (2) $k$-means,
    (3) agglomerative clustering,
    (4) affinity propagation,
    and (5) spectral clustering,
    except that the number of clusters was set
    to $k=15$ for each method that uses a fixed cluster size.
    }
    \label{cluster-fig}
\end{figure}

\subsection{Iterative step-function estimation}
\label{isfe-alg-subsec}

In Algorithm~\ref{algs}, we describe
iterative step-function estimation (ISFE), which can be used to produce graphon
function and value estimates.

Given a finite graph $G$, consider the following graphon function estimator procedure:
(a) partition the vertices of $G$ according to some clustering algorithm;
(b) repeatedly improve this partition by iteratively running
Algorithm~\ref{algs} for $T\ge 0$
iterations; and
(c) report the step-function graphon $W_{G/\PP}$, where $\PP$ is the final
partition produced, with its classes sorted according to their average edge
densities.
Let $V$ be a graphon and $n\in\Nats$, and suppose $G$ is a sample of the
$V$-random graph $\G(n, V)$.
The ISFE procedure on $G$ can be evaluated as a graphon function estimate in
terms of MISE by directly comparing $W_{G/\PP}$ to $V$.

ISFE can also be used to produce a graphon value estimate from a graph $G$ on $[n]$.
Let $k$ be the number of classes in an initial partition $\PP$ of $G$.
Implicit in the ISFE procedure is a map $p : [n] \to [k]$ sending each vertex of
$G$ to the index of
its class in $\PP$.
A graphon value estimate is then given by
$\widehat{M} = \bigl(W_{G/\PP}(\frac{2p(i)-1}{2k}, \frac{2p(j)-1}{2k})\bigr)_{i,j\in[n]}$.
In other words, a regular grid $(\frac{2\ell-1}{2k}, \frac{2m-1}{2k})_{\ell, m \in [k]}$
of $k\times k$ points
within $[0,1]^2$ is chosen
as a set of representatives of the piecewise constant regions of
$W_{G/\PP}$,
in some order that corresponds to
how the vertices of $G$ were rearranged into the partition $\PP$.
In a synthetic run, where $G$ is a sample of the $V$-random graph $\G(n, V)$
and we retain the history of how $\PP$ was formed from the values
$M = \bigl(V(U_i, U_j)\bigr)_{i,j\in[n]}$,
MSE can be evaluated by comparing $M$ with $\widehat{M}$.


\begin{Algorithm}[t]
\begin{minipage}{.9\textwidth}
   \begin{algorithmic}
       \State \textbf{Input}: graph $G$,
       initial partition $\PP^{(\text{old})}$, min.\ classes $\ell$,
       decay $d$

           \State \textbf{Output}: new partition $\PP^{(\text{new})}$

                \State Initialize $\QQ=\{Q_1\}, Q_1=\{1\}, c_1 = 1, \epsilon=1$.

                \While{number of classes $|\QQ| < \ell$}

                    \For{vertices $i=2,\ldots, n$}

                    \State Compute weighted edge-densities vector\\
\hspace*{35pt}
$                        e_{i} \defas \bigl\{ \frac{|P_j|}{n}\, e_G(\{i\}, P_j) \bigr\}_j$
$                        \text{for }  j = 1,\ldots,|\PP^{(\text{old})}|.$

                    \State Find $j$ minimizing $L^1$ distance between  
                    vectors $e_{i}$,$e_{c_j}$:\\
\hspace*{35pt}
$j^{*} \defas
\arg\min_{j} d_j(e_{i}, e_{c_j})$, where
                            \\
\hspace*{35pt}
$d_j(e_{i}, e_{c_j})\defas
                            \sum_{r}
                            \bigl|
\frac{|P_r|}{n}
e_G(\{i\}, P_r) -
\frac{|P_r|}{n}
e_G(\{c_j\}, P_r)
\bigr|$.

                        \If{minimum distance $d_{j^{*}}(e_i, e_{c_j}) < \epsilon$}
                        \State
                        Add vertex $i$ to existing class $Q_{j^{*}}$.
                    \Else

                        \State Create new class
                            $Q_{B} = \{i\}$, where $B = |\QQ| + 1$.

                        \State Set centroid $c_{B} = i$.

                        \State Add class $Q_{B}$ to partition $\QQ$.

                    \EndIf
                \EndFor

               \State Reduce $\epsilon$ by the decay parameter:
                $\epsilon \leftarrow \epsilon \cdot d$.
            \EndWhile
            \State Return $\PP^{(\text{new})} \leftarrow \QQ$.
    \end{algorithmic}
\end{minipage}
\label{algorithm}
\caption{\!\!{\bf.}\ \ Single iteration}
\label{algs}
\normalsize
\end{Algorithm}


\begin{figure}[h]
    \centering
    \begin{subfigure}[b]{0.18\linewidth}
        \centering
        \includegraphics[scale=.65]{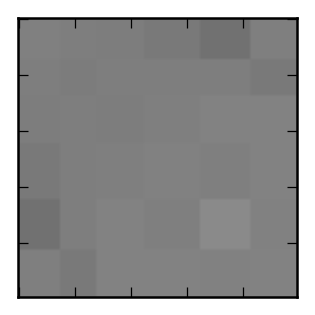}
        \caption{$T=0$}
    \end{subfigure}
    \begin{subfigure}[b]{0.18\linewidth}
        \centering
        \includegraphics[scale=.65]{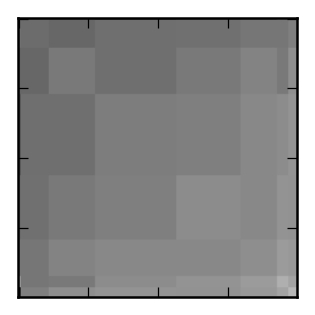}
        \caption{$T=1$}
    \end{subfigure}
    \begin{subfigure}[b]{0.18\linewidth}
        \centering
        \includegraphics[scale=.65]{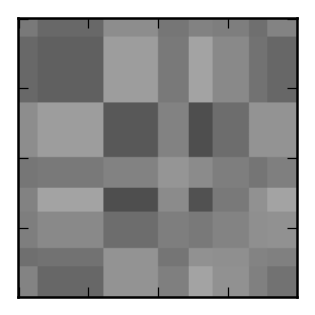}
        \caption{$T=2$}
    \end{subfigure}
    \begin{subfigure}[b]{0.18\linewidth}
        \centering
        \includegraphics[scale=.65]{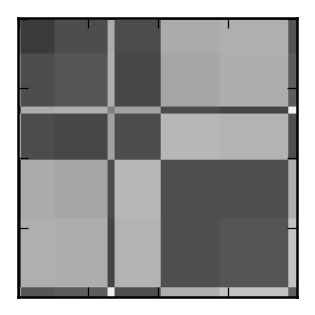}
        \caption{$T=3$}
    \end{subfigure}
    \begin{subfigure}[b]{0.18\linewidth}
        \centering
        \includegraphics[scale=.65]{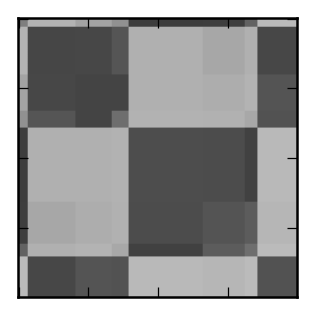}
        \caption{$T=4$}
    \end{subfigure}
    \caption{The first 4 iterations of ISFE on a 200 vertex sample from a SBM
        graphon with $p=0.5, q = (0.7, 0.3)$,
    beginning with a random partition into 6 classes $(T=0)$.}
    \label{iteration-fig}
    \normalsize
\end{figure}

As discussed in Section~\ref{stepfunction-estimators},
by the weak regularity lemma for graphons, every graphon
can be approximated to arbitrary accuracy in cut distance by a step-function,
whose number of steps depends on the desired accuracy and not the graphon.
ISFE seeks to take advantage of this structure.
In an iteration of ISFE, each vertex is grouped with other vertices that are
similar in average edge density with respect to the input partition, weighted by
the size of each class.
In this way, ISFE optimizes average edge densities between classes of the
partition in proportion to the number of vertices they contain.
Each subsequent
iteration seeks to improve upon the previous partition, by using it as the basis
for the next round of density calculations.

Figure~\ref{iteration-fig} demonstrates how ISFE extracts structure from a
graph sampled from an SBM over the course of several iterations, beginning with
a random partition ($T=0$), in which each vertex is independently placed into
one of 6 classes uniformly at random.
(For details on the SBM parameters, see Section~\ref{isfe-examples} below.)
Slight discrepancies in the edge densities between classes in the random
partition of $T=0$ are amplified in iterations $T=1,2$.  The substantial
correlations between the classes of the partition obtained in $T=2$ and
the true block structure allow in $T=3$ to produce a partition each of whose
classes is largely from a single block. This is refined slightly in $T=4$.

\subsection{Examples of ISFE}
\label{isfe-examples}

We now present examples of the behavior of ISFE on certain classes of graphons.
In
Appendix~\ref{appendix-finite-graphs},
we discuss how ISFE performs on step-functions of finite graphs.

\paragraph{Stochastic block model}
\label{sbm-text}

The stochastic block model (SBM)
has been extensively studied from many perspectives;
for a survey of some of the statistical literature as it relates to the graphon
estimation problems, see \citet[\S2.2]{2012arXiv1212.1247C}.

In the stochastic block model, we assume there is some fixed finite number of classes.
We define the SBM graphon with $2$ classes as follows: given parameters $p \in [0,1],
q=(q_0, q_1) \in [0,1]^2$,
we partition $[0,1]$ into two pieces
$P_0, P_1$ of length $p$ and $1-p$, where $p\in[0,1]$.
The value of the graphon is constant on $P_i \times P_i$ with value $q_0$ and constant on $P_i \times P_{1-i}$
with value $q_1$, for $i = 0,1$.

We show the result of ISFE on a graph sampled from an SBM graphon in
Figure~\ref{sbm-fig} for $p=0.5, q = (0.7, 0.3)$.
In the first column, we plot the graphon. We sample a 200
vertex graph from this SBM (column 2) and run ISFE (starting with the trivial partition) on the sample
with $\ell=8$ for $T=5$ iterations (column 3).
We show in the fourth column the original sample rearranged according to the
step-function estimate produced by ISFE.
The last column is the step-function estimate discretized on a $200\times200$
grid and sorted according to increasing $U_i$ value.
(On non-synthetic datasets, where there are no such values $U_i$, it is not possible to
produce this reordering.)
The graphon in this final column is more easily visually compared to the
original graphon in column 1, whereas the original estimate in column 3 shows
more directly the partition found by ISFE.

\begin{figure*}[t]
\centering
\begin{subfigure}[b]{\linewidth}
    \centering
    \includegraphics[scale=.7]{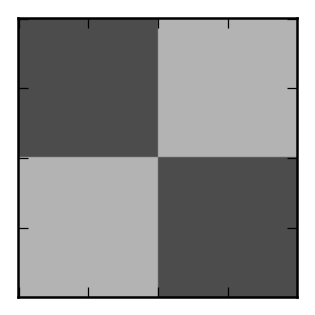}
    \includegraphics[scale=.7]{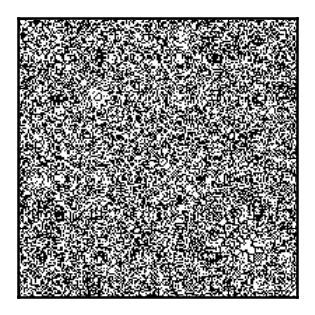}
    \includegraphics[scale=.7]{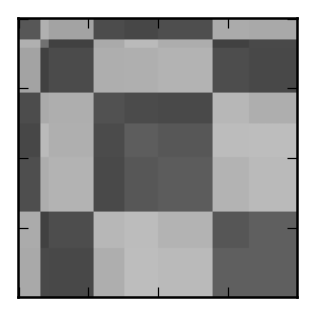}
    \includegraphics[scale=.7]{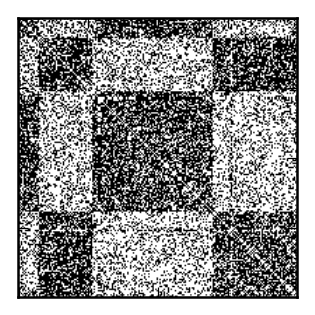}
    \includegraphics[scale=.7]{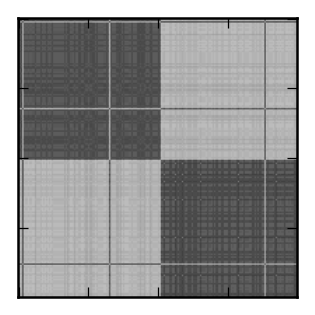}
    \caption{Stochastic block model}
    \label{sbm-fig}
\end{subfigure}
\begin{subfigure}[b]{\linewidth}
    \centering
    \includegraphics[scale=.7]{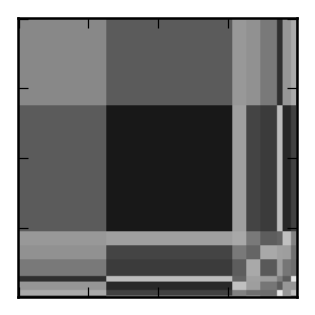}
    \includegraphics[scale=.7]{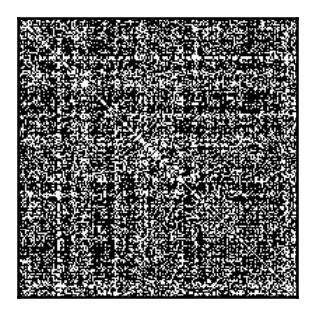}
    \includegraphics[scale=.7]{figs/irm3}
    \includegraphics[scale=.7]{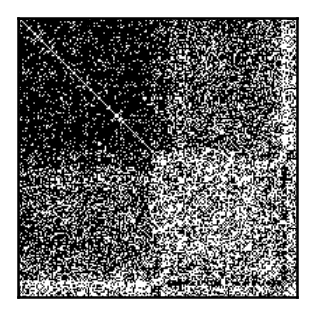}
    \includegraphics[scale=.7]{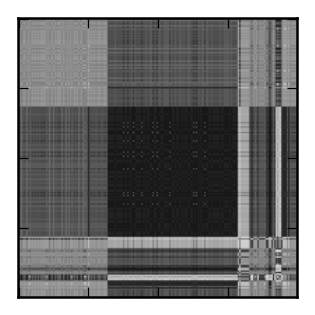}
    \caption{Infinite relational model}
    \label{irm-fig}
\end{subfigure}
\caption{
    Examples of graphon estimation using ISFE.
    Rows:\ (a) an SBM graphon and (b) an IRM graphon.
    Columns:\ (1) the original graphon; (2) a 200 vertex random sample from the graphon;
    (3) ISFE result with $\ell=8$ (SBM), $15$ (IRM), $T=4$ on a
        200-vertex sample; (4) the random sample reordered according to
    the ISFE estimate; (5) ISFE estimate rearranged by increasing $U_i$.
    ISFE was applied to the trivial partition, i.e., all vertices were initially in a
    single class.
}
\label{eg-fig}
\end{figure*}

\paragraph{Infinite relational model}

The infinite relational model (IRM) \citep{C.Kemp:2006:53fd9}
is a non-parametric extension of the SBM,
where the (infinite) partition is generated by a Chinese restaurant process with
concentration parameter $\alpha$.
For a description of the distribution on graphons implicit in this model, see
\citet[Example~IV.1]{DBLP:journals/pami/OrbR14} and
\citet[\S4]{DBLP:conf/nips/LloydOGR12}.
For each class of the partition,
the graphon is constant with value sampled from
a beta distribution with parameters $a, b$.

We show the result of ISFE on a graph sampled from an IRM graphon with
$\alpha=3, a=3, b=2.9$ in
Figure~\ref{irm-fig}. The five columns are analogous to those described above
for Figure~\ref{sbm-fig}.

\section{Analysis of ISFE}
\label{isfe-analysis}

We analyze one aspect of the behavior of a single iteration of a variant of ISFE
(with randomly assigned centroids for ease of analysis, though we expect
greedily chosen centroids, as described in Algorithm~\ref{algs}, to perform even
better) on a stochastic block model.
Consider a $2$-step stochastic block model
graphon with steps of size $p$ and $1-p$ where $p\le \frac12$ and edge densities
$q_0 > q_1$. In this situation we say that a vertex is
\defn{correctly classified} by a class of a partition if it comes from the same
block as the majority of vertices within this class. We can think of the
fraction of vertices
correctly classified by a partition as a proxy for MSE: if a partition's classes
correctly classify a large fraction of vertices, then the
step-function graphon induced by that partition must have small MSE.

Suppose the algorithm begins with a partition of the vertices of a sampled
graph $G$ on $[n]$ into $k$ classes that
correctly classifies some fraction $\tau > 1 - \frac1{4k}$ of the
vertices. We show for arbitrary $\tau'> \tau$, that for sufficiently large
$n$ and $k$, with high probability this iteration correctly classifies a
fraction $\tau'$ of vertices.
While this does not demonstrate how ISFE ``gets started'' from a trivial or
random partition, it does provide insight into how ISFE behaves once a large
fraction of vertices have been correctly classified.

\begin{theorem}
\label{Theorem: Main theorem}
Suppose $\tau > 1- \frac{1}{4k}$ and that the initial partition
of $G$ correctly classifies at least $\tau n$ many vertices.
If $\tau' > \tau$, then for every $\epsilon > 0$ and every $\xi > 0$ such that
\begin{itemize}
\item[(i)]
$q_0-q_1 \geq \frac{3 - (1 - 4k(1-\tau))^{\frac{1}{2}}}{\tau} - 2 + 4 \epsilon
+ \frac{ 1 - (1 - 4k(1-\tau))^{ \frac{1}{2} } }{ n \tau (1 - \tau) }$
\ and
\item[(ii)]
$\epsilon^2\, n  > -12k \, \log\Bigl(\dfrac{1-(\tau' +
\xi)^{\frac{1}{k}}}{2}\Bigr),$
\end{itemize}
the partition obtained by applying (this variant of) ISFE correctly classifies
at least $\tau' n$ many vertices with probability at least
\[
\bigl(1- p^k - (1-p)^k\bigr) \,
 \left(1 - 2 \, \exp\left\{ -\frac{\epsilon^2\,n}{12k}\right\}\right)^{k^2}
 \left(1 - 2 \, \exp\left \{ -\frac{\xi^2\, n}{3} \right \} \right).
\]
\end{theorem}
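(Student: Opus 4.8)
The plan is to lower-bound the probability of correct reclassification by intersecting three events, one for each factor in the stated bound. I would split the randomness into the block labels $\mathbf{1}\{U_i \le p\}$ (i.i.d.\ $\Bernoulli(p)$, hence independent across distinct vertices), the random choice of the $k$ centroids, and the edges $G_{ij}$ given the labels. The three events are: (E1) the chosen centroids include at least one vertex from each of the two SBM blocks; (E2) every centroid's weighted edge-density vector is close to its population value; and (E3) the count of correctly classified vertices does not fall below its typical value. Event (E1) is a statement about labels and centroids only, and since the event that all $k$ centroids share a single block has probability $p^k + (1-p)^k$, it contributes the first factor $1 - p^k - (1-p)^k$.

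The heart of the argument is a separation computation for the weighted edge-density vectors $e_i = \{\frac{|P_r|}{n} e_G(\{i\}, P_r)\}_r$. Writing $n_r^{(0)}, n_r^{(1)}$ for the numbers of block-$0$ and block-$1$ vertices in the old class $P_r$, the conditional expectation of the $r$-th coordinate of $e_i$ depends only on the block of $i$: it is $\frac{1}{n}(n_r^{(0)} q_0 + n_r^{(1)} q_1)$ for a block-$0$ vertex and $\frac{1}{n}(n_r^{(0)} q_1 + n_r^{(1)} q_0)$ for a block-$1$ vertex. Hence all block-$a$ vertices share a common population vector $\mu_a$, and
\[
\norm{\mu_0 - \mu_1}_1 = (q_0 - q_1) \sum_r \frac{|n_r^{(0)} - n_r^{(1)}|}{n} \ge (q_0 - q_1)(2\tau - 1),
\]
where the inequality uses that the old partition correctly classifies at least $\tau n$ vertices, so $\sum_r \max(n_r^{(0)}, n_r^{(1)}) \ge \tau n$ and $\sum_r \min(n_r^{(0)}, n_r^{(1)}) \le (1-\tau)n$. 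Thus two same-block vertices have identical population vectors, whereas block-$0$ and block-$1$ vertices are well separated in $L^1$ exactly when $q_0-q_1$ and the current accuracy $\tau$ are large.

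Next I would invoke concentration: each coordinate $\frac{|P_r|}{n} e_G(\{i\}, P_r)$ is a scaled sum of conditionally independent Bernoulli edges, so Hoeffding's inequality controls its deviation from the corresponding coordinate of $\mu_a$ with failure probability of the form $2\exp(-\epsilon^2 n/(12k))$. Applying this to each of the $k$ centroids against each of the $k$ old classes and union-bounding over these $k^2$ events gives the factor $(1 - 2\exp(-\epsilon^2 n/(12k)))^{k^2}$ for (E2). On the intersection of (E1) and (E2), consider any vertex whose own realized vector also lies within $\epsilon$ of its block's population vector: the triangle inequality bounds its distance to a same-block centroid by about $2\epsilon$, while its distance to any cross-block centroid is at least $\norm{\mu_0 - \mu_1}_1 - 2\epsilon$, so—provided condition (i) makes the latter strictly larger—it is assigned to a same-block centroid and is correctly classified. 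The vertices violating their own concentration form a sum of $n$ indicators; concentrating this count yields the factor $1 - 2\exp(-\xi^2 n/3)$ of (E3), and condition (ii), which rearranges exactly to $1 - 2\exp(-\epsilon^2 n/(12k)) > (\tau'+\xi)^{1/k}$, calibrates the per-vertex success probability so that, after the product over the $k$ classes and the $\xi$-slack absorbed by (E3), at least $\tau' n$ vertices survive.

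The main obstacle is the delicate bookkeeping behind condition (i), not the skeleton above. The clean separation $(q_0-q_1)(2\tau-1)$ must be sharpened into the exact threshold stated: the square-root term $(1 - 4k(1-\tau))^{1/2}$ is precisely what forces the hypothesis $\tau > 1 - \frac{1}{4k}$ (so that the radicand stays positive), and I expect it to arise from solving a quadratic for the worst-case allocation of the at most $(1-\tau)n$ misclassified vertices among the $k$ classes, together with the finite-$n$ correction $\frac{1-(1-4k(1-\tau))^{1/2}}{n\tau(1-\tau)}$ and the tolerance term $4\epsilon$. A secondary difficulty is that the centroids, the old partition, and the sampled edges are coupled—in particular a centroid may itself be a misclassified vertex, and it shares the single edge $G_{ic}$ with any vertex it is compared against—so the concentration and union-bound steps must be arranged to respect or harmlessly absorb these dependencies before the product over the three events can be asserted.
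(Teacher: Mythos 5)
Your three-factor decomposition (centroids hit both blocks, centroid concentration, count concentration), the role of $p_k = 1-p^k-(1-p)^k$, and the rearrangement of condition (ii) into $1-2\exp\{-\epsilon^2 n/(12k)\} > (\tau'+\xi)^{1/k}$ all match the paper's proof. But the two places you explicitly defer are precisely the two ideas that carry the paper's argument, and without them the theorem as stated is not proved. First, condition (i): the paper does not use your global separation $\norm{\mu_0-\mu_1}_1 \ge (q_0-q_1)(2\tau-1)$. It introduces $\delta$-large classes ($|C_i|\ge \delta n/k$) and $\delta$-good classes (majority fraction $\ge\delta$), proves that every $\delta$-large class is $\delta$-good once $\delta-\delta^2 \ge k(1-\tau)$, and in the main proof chooses $\delta = \frac{1+(1-4k(1-\tau))^{1/2}}{2}$, the root of $\delta-\delta^2 = k(1-\tau)$; this quadratic is exactly where the square root and the hypothesis $\tau > 1-\frac1{4k}$ come from. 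The separation estimate is then carried out class-by-class against the majorities $C^*_i$ (in a self-looped graph $G^\dagger$, with the misclassified minority contributing the $1-\tau$ slack, the non-$\delta$-large classes contributing a $1-\delta$ slack, and finite-size corrections $k/(n\delta)$), and condition (i) is literally the inequality $q_0-q_1 \ge 2\bigl(\frac{2-\delta}{\tau}-1+2\epsilon+\frac{k}{n\delta\tau}\bigr)$ after substituting this $\delta$. Your guess that a worst-case-allocation quadratic is responsible is directionally right, but you never carry it out, and your cruder separation yields a hypothesis on $q_0-q_1$ different from (i), so it does not establish the stated theorem.

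Second, the probabilistic bookkeeping fails to produce the stated bound. A union bound over the $k^2$ centroid-versus-class events gives $1 - 2k^2\exp\{-\epsilon^2 n/(12k)\}$, which is a \emph{weaker} lower bound than the claimed factor $\bigl(1-2\exp\{-\epsilon^2 n/(12k)\}\bigr)^{k^2}$ (Bernoulli's inequality runs the wrong way), so your (E2) step cannot yield the theorem's probability. Similarly, your (E3) count is a sum of \emph{dependent} indicators (vertices share edges with the centroids and with each other), so Chernoff does not apply directly. The paper resolves both issues with one lemma you are missing: conditioning on $\GG_{\epsilon,\delta}(x_1),\dots,\GG_{\epsilon,\delta}(x_r)$ can only increase $\Pr(\GG_{\epsilon,\delta}(y))$, hence $\Pr\bigl(\bigwedge_i \GG_{\epsilon,\delta}(x_i)\bigr) \ge \prod_i \Pr(\GG_{\epsilon,\delta}(x_i))$. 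Combined with the genuine independence, for a fixed vertex, of its densities against the disjoint majorities $C^*_1,\dots,C^*_k$ (giving $(1-2\exp\{\cdot\})^{k}$ per centroid via Chernoff on each $|C^*_i|\ge \delta^2 n/k$), this positive-correlation lemma yields the product $(1-2\exp\{\cdot\})^{k^2}$ over the $k$ centroids, and it also lets the number of good vertices be compared to a $\Binomial(n,\alpha)$ variable, which is what licenses the factor $1-2\exp\{-\xi^2 n/3\}$. You flag these dependencies as needing to be ``respected or harmlessly absorbed,'' but supply no mechanism; that mechanism is the missing idea.
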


This shows that if ISFE is
given a partitioning operation which only is guaranteed to correctly classify
a fraction $\tau$ of vertices of an SBM, then iterating ISFE beginning with this
partition ensures that with high probability an arbitrarily large fraction of
vertices are correctly classified, in an appropriate limit of increasingly large
$n$ and $k$.

For the proof, see
Appendix~\ref{appendix-proof}.

\section{Results}
\label{evaluation}

We examine synthetic data sampled from several graphons:
(1) a gradient graphon given by the function
$W(x,y) = \frac{(1-x) + (1-y)}{2}$;
(2) an SBM graphon with $p=0.5$, $q = (0.7, 0.3)$;
(3) an SBM graphon with  $p=0.3$, $q=(0.7,0.3)$; and
(4) an IRM graphon with $\alpha=3, a=3,b=2.9$.
In Figure~\ref{SAS-comparison}, we display the results of ISFE and two other estimators on a 200 vertex
sample from each of the graphons.
The first column displays the graphon, the
second column is the result of ISFE on the sample (starting with the trivial partition), the
third column shows the result of SAS \citep{DBLP:conf/icml/ChanA14}, and the
last column shows the result of USVT \citep{2012arXiv1212.1247C}.
We display in Figure~\ref{figure-plots} the MSE of each estimation method on
samples from the graphons listed above for varying sizes of $n$,
averaged over 50 independent draws.

We evaluate all estimators using the mean squared error (MSE)
given in Equation~\eqref{mse-eq} for the graphon value estimator $\hat{M}$ we
describe below.
For ISFE, we consider the graphon value estimator $\hat{M}$ described in
Section~\ref{isfe-alg-subsec};
the parameter $\ell$ was set to the value that minimized the MSE over
a grid of values;
the gradient graphon used $\ell=20$, the SBM graphon with $p=0.5$
used $\ell = 8$, the SBM graphon with $p=0.3$ used $\ell = 10$, and the IRM graphon
used $\ell = 15$.
For a 200-vertex sample from the IRM graphon, ISFE took 3.4 seconds to
complete for $T=3$ (using 8 GB RAM and 4 cores at 2.8 GHz).
For a fixed partition size, there is a limit to how well one can approximate the
graphon in MSE.
Hence after some number of iterations, ISFE will cease to improve.
We therefore set the number of iterations $T$ to be the first value after which
the MSE no longer improved by at least $10^{-3}$.

We modified SAS by minimizing the total variation
distance using a general-purpose constrained optimization method instead of
using the alternating direction method of multipliers.
For USVT, we follow \citet{DBLP:conf/icml/ChanA14} and first sort the sample by degree.
SAS and USVT
expect graphons with monotonizable degree distribution.
In order to exhibit these estimation techniques for arbitrary graphons, we
take $\hat{M}$ to be a permutation of their estimated matrix, rearrranged so as
to invert the way vertices from the sampled graph were sorted.

For all graphon estimator images, we re-sort the result by increasing $U_i$ so
that the result can be visually compared to the original graphon.
While MSE is directly a measure of similarity between this re-sorted estimate
and the original graphon (evaluated at certain points), in some cases better
upper-bounds on the MISE may be obtained from other rearrangements of the estimate.
Indeed, the smoothed function estimates obtained by SAS and USVT for the gradient
lead to a considerably smaller upper-bound on the MISE than their displayed
re-sorted estimates (and than the original value estimates by USVT).

\begin{figure}[tb!]
\centering
    \begin{subfigure}[b]{\textwidth}
        \centering
        \includegraphics[scale=.7]{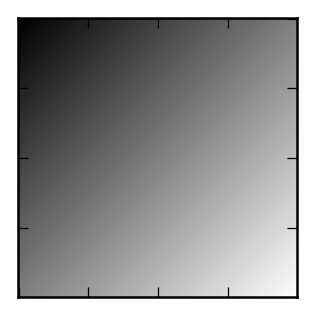}
        \includegraphics[scale=.7]{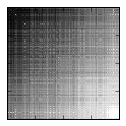}
        \includegraphics[scale=.7]{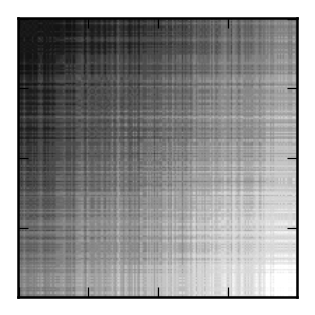}
        \includegraphics[scale=.7]{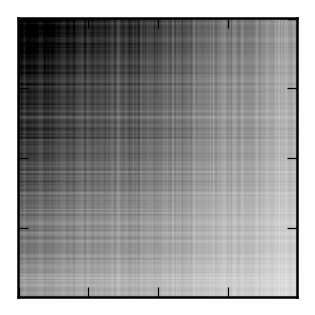}
\vspace*{-3pt}
    \caption{Gradient.
\vspace*{-3pt}
    }
    \label{SAS-comparison-gradient}
    \end{subfigure}
    \\
    \begin{subfigure}[b]{\textwidth}
    \centering
    \includegraphics[scale=.7]{figs/sbm1}
    \includegraphics[scale=.7]{figs/sbm5}
    \includegraphics[scale=.7]{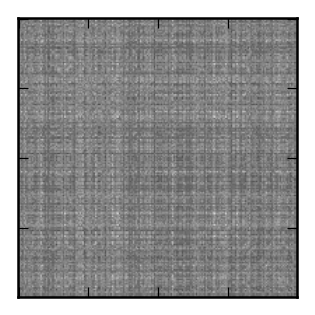}
    \includegraphics[scale=.7]{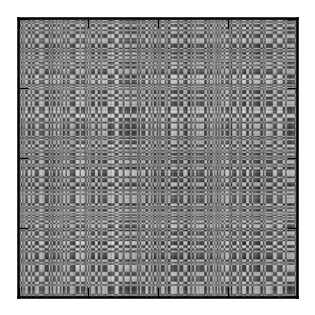}
\vspace*{-3pt}
    \caption{SBM, $p=0.5, q=(0.7,0.3)$. 
\vspace*{-3pt}
    }
    \label{SAS-comparison-SBM-even}
    \end{subfigure}
    \\
    %
    \begin{subfigure}[b]{\textwidth}
    \centering
    \includegraphics[scale=.7]{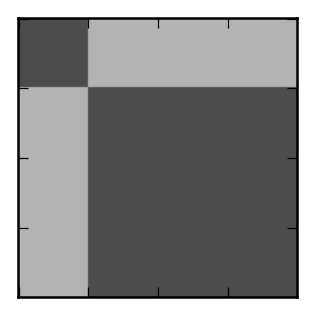}
    \includegraphics[scale=.7]{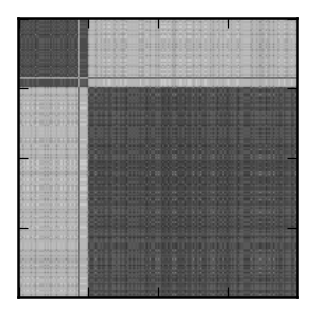}
    \includegraphics[scale=.7]{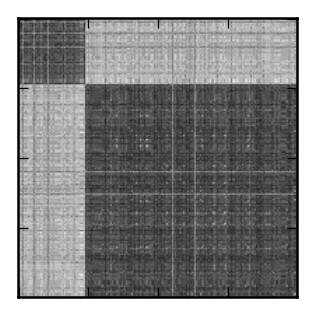}
    \includegraphics[scale=.7]{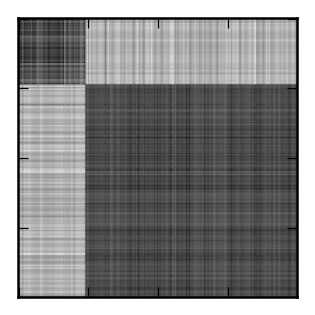}
\vspace*{-3pt}
    \caption{SBM, $p=0.3, q=(0.7,0.3)$. 
\vspace*{-3pt}
    }
    \label{SAS-comparison-SBM-uneven}
    \end{subfigure}
    \\
    %
    \begin{subfigure}[b]{\textwidth}
    \centering
    \includegraphics[scale=.7]{figs/irm1}
    \includegraphics[scale=.7]{figs/irm5}
    \includegraphics[scale=.7]{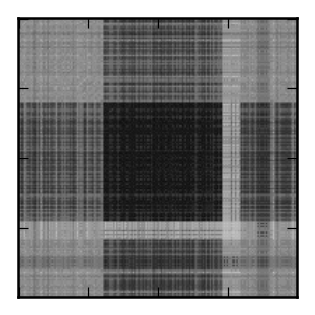}
    \includegraphics[scale=.7]{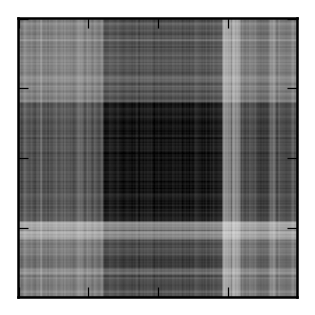}
\vspace*{-3pt}
    \caption{IRM, $\alpha=3, a=3, b=2.9$.
\vspace*{-3pt}
    }
    \label{SAS-comparison-IRM}
    \end{subfigure}
    \\
    \caption{Comparison of graphon estimation methods ISFE, SAS, and USVT.
        Rows:\ (a) Gradient graphon, (b) SBM graphon, $p=0.5, q=(0.7,0.3)$,
        (c) SBM graphon,  $p=0.3, q=(0.7,0.3)$, and (d) IRM graphon, $\alpha=3, a=3, b=2.9$.
        Columns:\ (1) The original graphon, (2) ISFE
        (beginning with the trivial partition), (3) SAS, and (4) USVT.
        All estimator images are sorted by increasing $U_i$.
    }
    \label{SAS-comparison}
\end{figure}

SAS and USVT
perform well
not only for graphons with monotonizable degree distribution, such as gradients
(as in Figure~\ref{SAS-comparison-gradient}), for which SAS was explicitly designed,
but also ones that are monotonizable up to some partition (as in
Figures~\ref{SAS-comparison-SBM-uneven} and \ref{SAS-comparison-IRM}).
However, when the degree
distribution is constant over regions with different structure
(as in Figure~\ref{SAS-comparison-SBM-even}),
SAS and USVT fail to discern this structure.
In contrast, ISFE is able to recover much of the structure after a small number of
iterations, even when it begins with no structural information, i.e., the
trivial partition.

\begin{figure}[t!]
\centering
\begin{subfigure}{0.24\textwidth}
    \includegraphics[scale=0.27]{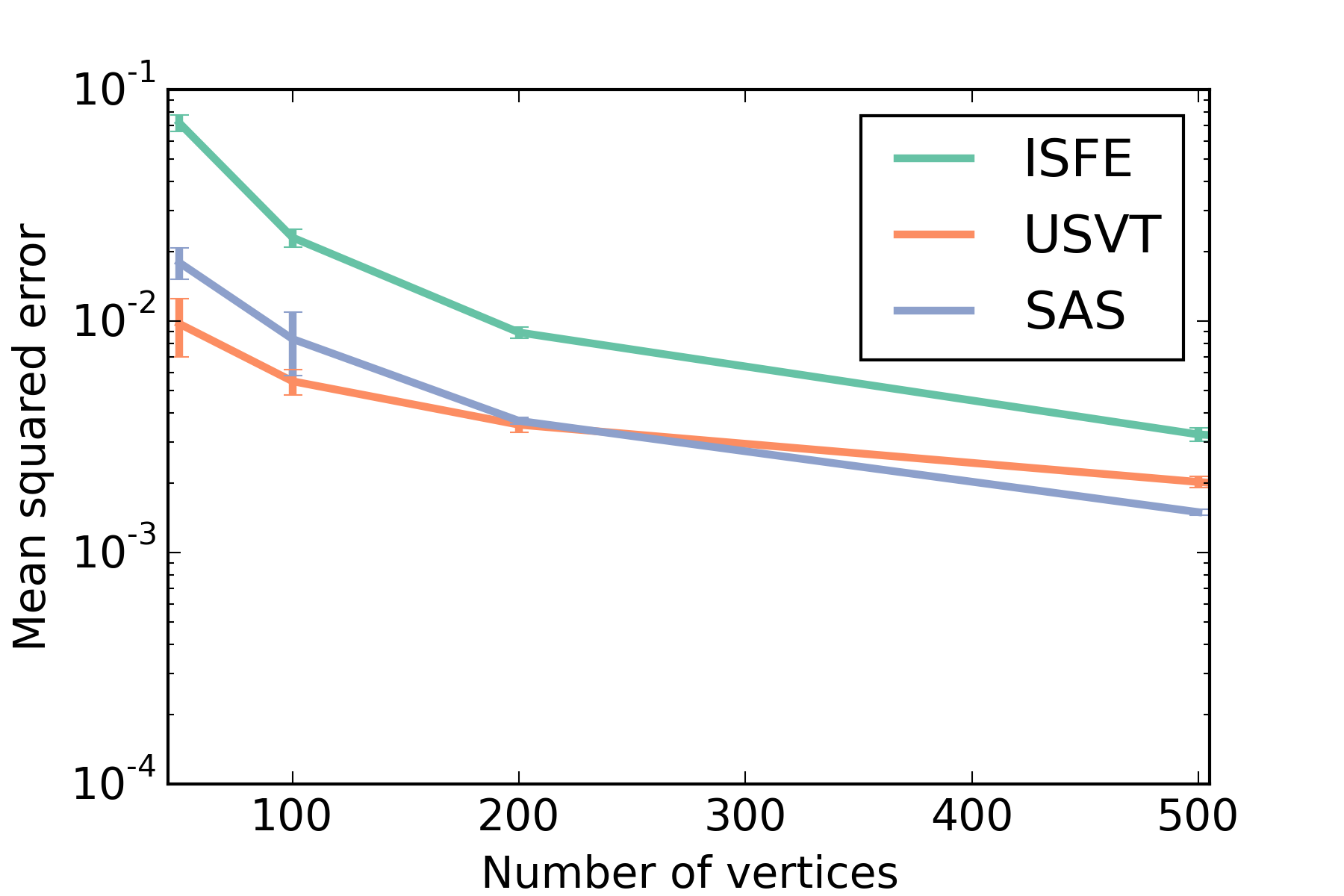}
    \caption{Gradient}
\end{subfigure}
\begin{subfigure}{0.24\textwidth}
    \includegraphics[scale=0.27]{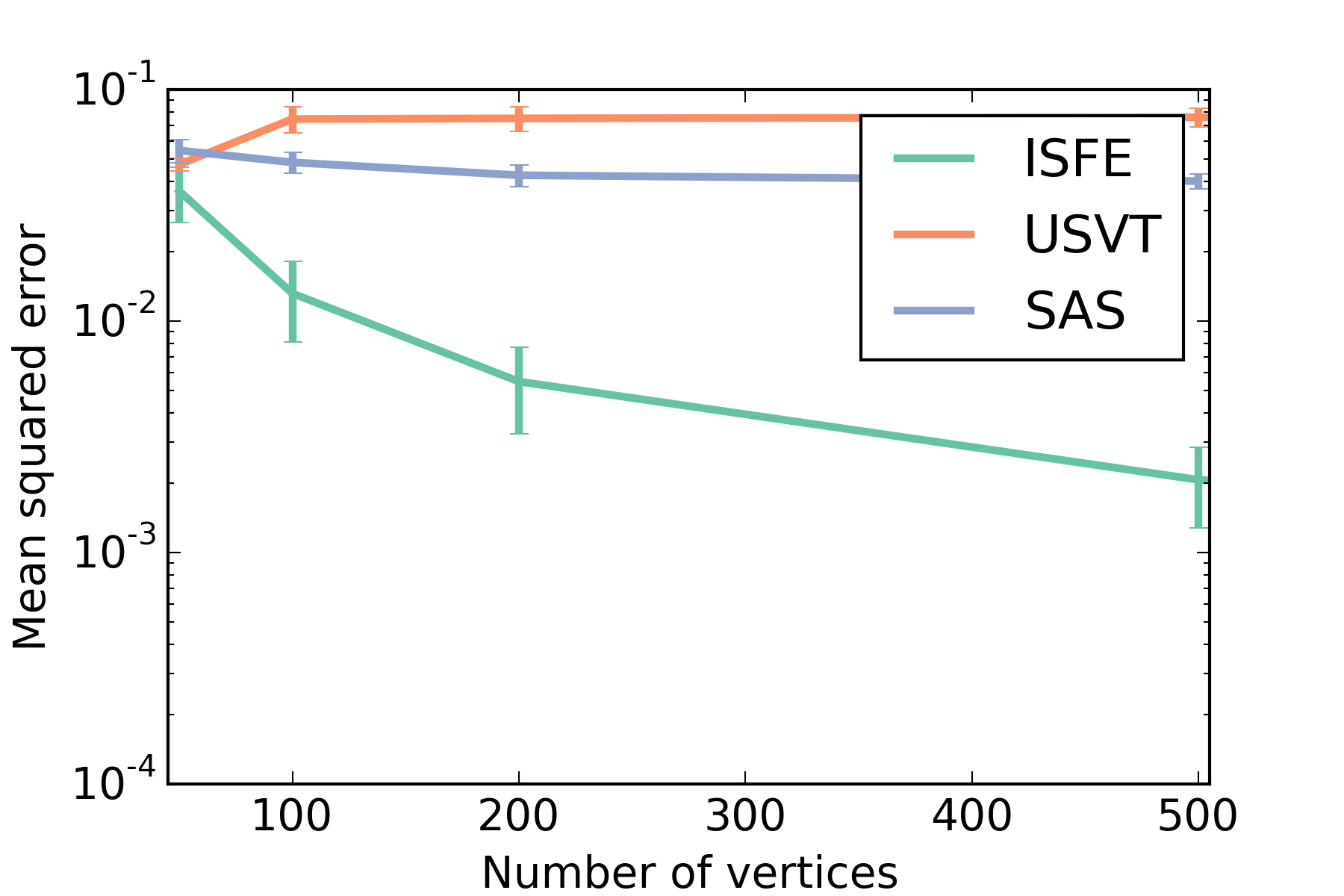}
    \caption{SBM, $p=0.5$}
\end{subfigure}
\begin{subfigure}{0.24\textwidth}
    \includegraphics[scale=0.27]{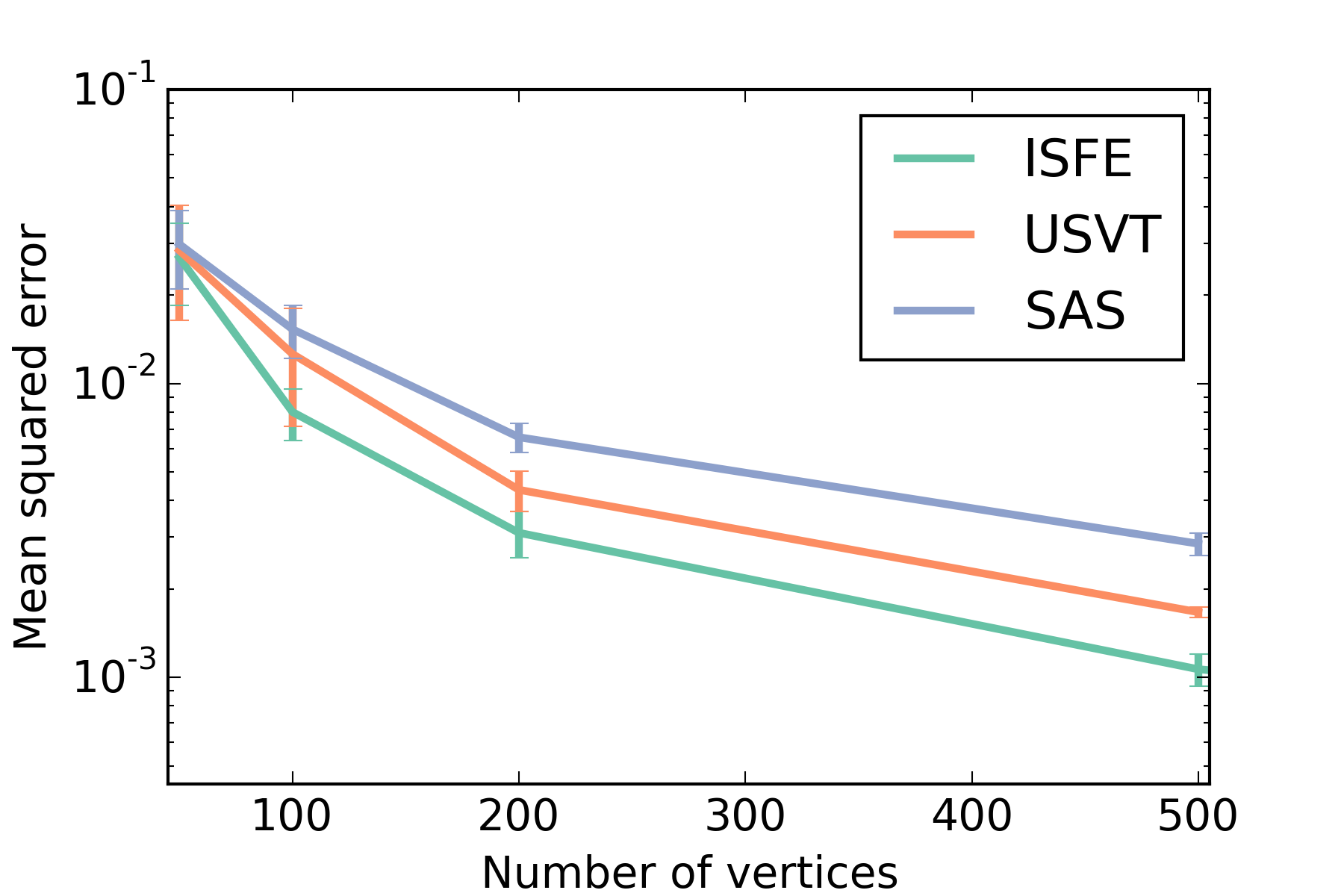}
    \caption{SBM, $p=0.3$}
\end{subfigure}
\begin{subfigure}{0.24\textwidth}
    \includegraphics[scale=0.27]{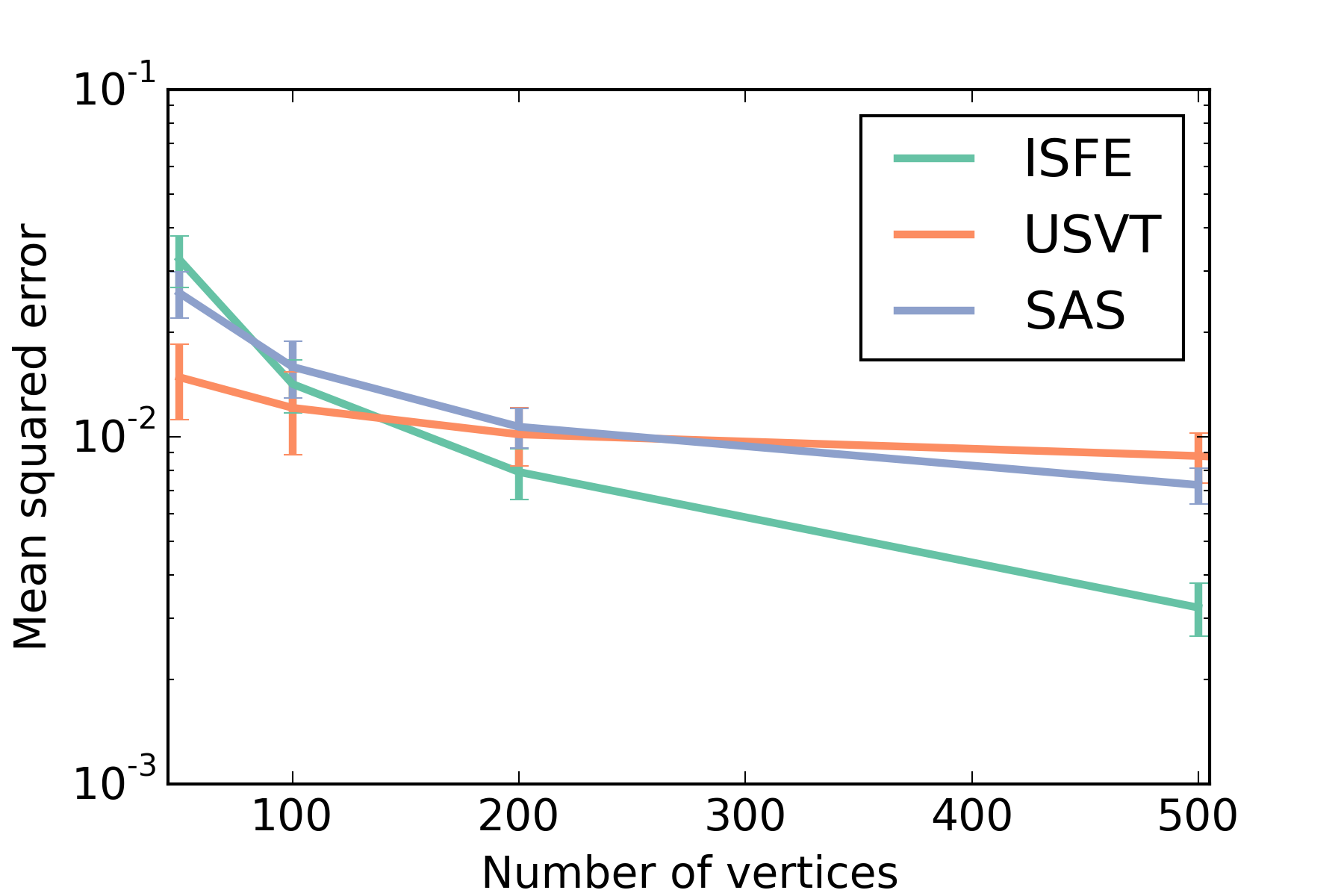}
    \caption{IRM}
\end{subfigure}
\caption{MSE of ISFE, USVT, and SAS estimators on independent samples from
    gradient graphon,
    SBM graphon, $p=0.5, q=(0.7,0.3)$,
    SBM graphon, $p=0.3, q=(0.7,0.3)$,
    IRM graphon, $\alpha=3, a=3, b=2.9$.
    Error bars represent the standard deviation.
    }
\label{figure-plots}
\end{figure}

In
Appendix~\ref{appendix-data}, we examine the result of ISFE on three real-world
social network datasets.
The question of how to model sparse graphs remains an important open problem,
    as we discuss in
    Appendix~\ref{appendix-data}.
To demonstrate our graphon estimator, we sidestep these issues to some extent by
considering a denser subgraph of the original network.

\section{Discussion}
\label{discussion}

While any clustering algorithm naturally induces a graphon estimator, we have
described and shown some of the improvements that may be obtained by grouping
vertices according to their average edge densities with respect to the clusters.
The fact that such improvements are possible is unsurprising for graphs
admitting block
structure (although there are many possible refinements and further analysis of
the algorithm we
describe). Indeed, arbitrary graphons are well-approximated by step-function
graphons in the cut metric, and step-functions graphons (which arise as models
in the
stochastic block model and elsewhere) are well-approximated by such in $L^2$.
A key problem is to devise graphon estimators that further leverage this
structure.

\subsubsection*{Acknowledgments}
The authors would like to thank
Daniel Roy for detailed comments on a draft, and
James Lloyd and
Peter Orbanz
for helpful conversations.

This material is based upon work supported by the United States Air Force and
the Defense Advanced Research Projects Agency (DARPA) under Contract Numbers
FA8750-14-C-0001 and FA8750-14-2-0004.
Work by C.\,F.\ was also made possible through the support of Army Research
Office grant number W911NF-13-1-0212 and a grant from Google.
Any opinions, findings and conclusions or recommendations expressed in this
material are those of the authors and do not necessarily reflect the views of
the United States Air Force, Army, or DARPA.

\newpage

\bibliographystyle{plainnat-mod}

\bibliography{graphons}

\newpage

\begin{appendix}
   \section{Measures of risk for graphons}
\label{appendix-risk}

\subsection{The cut metric}
\label{appendix-cut}

The \emph{cut metric} defines a notion of distance
between two graphs or graphons.
We begin by defining it for finite graphs on the same vertex set,
following \S8.1 and \S8.2 of \citet{MR3012035}.

\begin{definition}
Let $F, G$ be two graphs on $[n]$. The \defn{cut metric} between $F$ and $G$ is given by
\begin{align}
\label{cutmetric}
    d_\Box(F, G) \defas \max_{S,T\subseteq [n]} \frac{|c_F(S,T) - c_G(S,T)|}{n^2}.
\end{align}
\end{definition}
Note that the denominator of Equation~\eqref{cutmetric} is $n^2$ regardless of the size of $S$ and $T$; having large distance between $F$ and $G$ in the cut metric implies that there is some large vertex set on which their respective edge densities differ.

The \defn{cut distance} $\delta_\Box$ between two graphs on different vertex sets of the same size $n$ is then defined to be the minimum of $d_\Box$ over all relabelings of $F$ and $G$ by $[n]$.
While the cut distance can be extended to arbitrary finite weighted graphs on different vertex sets, these definitions are rather technical, and so we instead define the analogous quantity for graphons, from which one may inherit the corresponding notions via step-function graphons.

\begin{definition}
Let $W, V$ be graphons. The \defn{cut metric} between $W$ and $V$ is given by
\[
    d_\Box(W, V) \defas \sup_{S,T\subseteq [0,1]} \left | \int_{S \times T} \bigl( W(x,y) - V(x, y)\bigr)\, dx\, dy \right |,
\]
where $S$ and $T$ range over measurable subsets. The \defn{cut distance} between $W$ and $V$ is defined to be
\[
    \delta_\Box(W, V) \defas \inf_\varphi \, d_\Box(W,V^\varphi),
\]
where $\varphi$ is a measure-preserving transformation of $[0,1]$.
\end{definition}
Note that the cut distance is only a pseudometric, as it is zero for weakly isomorphic graphons.

By the Frieze--Kannan weak regularity lemma \citep{MR1674741,MR1723039},
step-functions approximate graphons in the cut distance, where the required number of steps depends only on the desired accuracy, not the choice of graphon.

\begin{lemma}[Weak regularity {\citep[Lemma~9.3]{MR3012035}}]
\label{FKlemma}
For every $k\geq 1$ and any graph $G$, there is a partition $\PP$ of the vertices of $G$
into $k$ classes such that
\[
        \delta_\Box(G, W_{G/\PP}) \leq \frac{2}{\sqrt{\log k}}.
\]
\end{lemma}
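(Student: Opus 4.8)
The plan is to prove the stronger statement with the plain cut metric $d_\Box$, taking $\varphi$ to be the identity so that $\delta_\Box(G, W_{G/\PP}) \le d_\Box(W_G, W_{G/\PP})$, via an energy-increment argument. First I would identify the finite graph $G$ with its step-function graphon $W_G$ and a vertex partition with the corresponding partition of $[0,1]$ into intervals, so that everything lives among graphons. The central object is the \emph{index} (or energy) of a partition $\PP$, namely $\mathrm{ind}(\PP) \defas \norm{W_{G/\PP}}_2^2$. Observing that $W_{G/\PP} = \Expect[\, W_G \given \PP\times\PP \,]$ is exactly the $L^2$-orthogonal projection of $W_G$ onto the functions constant on each $P_i\times P_j$, the Pythagorean identity gives $\norm{W_G - W_{G/\PP}}_2^2 = \norm{W_G}_2^2 - \mathrm{ind}(\PP)$. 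Two facts follow at once: refining $\PP$ can only increase the index (projection onto a larger subspace), and $0 \le \mathrm{ind}(\PP) \le \norm{W_G}_2^2 \le 1$.

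The crux is the energy-increment step. Suppose $d_\Box(W_G, W_{G/\PP}) > \epsilon$, witnessed by measurable sets $S, T$ with $\abs{\int_{S\times T}(W_G - W_{G/\PP})} > \epsilon$. I would refine $\PP$ by cutting every class along $S$ and $T$; this yields a partition $\PP'$ with $\abs{\PP'}\le 4\abs{\PP}$ classes (each $P_i$ splits into the four pieces $P_i\cap S\cap T$, $P_i\cap S\cap T^c$, $P_i\cap S^c\cap T$, $P_i\cap S^c\cap T^c$), for which $\mathbf{1}_{S\times T}$ is $\PP'\times\PP'$-measurable. Since $W_G - W_{G/\PP'}$ is orthogonal to every $\PP'\times\PP'$-step function, $\int_{S\times T}(W_G - W_{G/\PP}) = \int_{S\times T}(W_{G/\PP'} - W_{G/\PP})$; Cauchy--Schwarz against $\mathbf{1}_{S\times T}$ (whose $L^2$-norm is at most $1$) then forces $\norm{W_{G/\PP'} - W_{G/\PP}}_2 > \epsilon$, and one more application of Pythagoras gives $\mathrm{ind}(\PP') - \mathrm{ind}(\PP) > \epsilon^2$. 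This inequality, and in particular the bookkeeping that the witnessing sets $S, T$ be absorbed into the refined partition so that the Cauchy--Schwarz step is legitimate, is the main obstacle; the remainder is soft.

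Finally I would iterate: starting from the trivial one-class partition and repeatedly applying the increment step as long as the cut distance exceeds $\epsilon$. Because each step raises the index by more than $\epsilon^2$ while the index is capped at $1$, the process halts after fewer than $1/\epsilon^2$ refinements, and since each refinement at most quadruples the class count, the terminal partition has at most $4^{\ceil{1/\epsilon^2}}$ classes and satisfies $d_\Box(W_G, W_{G/\PP}) \le \epsilon$. Solving $4^{1/\epsilon^2} = k$ gives $\epsilon = \sqrt{2\log 2}/\sqrt{\log k} \le 2/\sqrt{\log k}$, which recovers the stated bound with room to spare in the constant. To obtain exactly $k$ classes as in the statement, I would pad with empty classes or split existing ones arbitrarily, neither of which increases the cut distance since refinement only helps. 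As this whole argument controls $d_\Box$ using the identity transformation and $\delta_\Box \le d_\Box$, the conclusion follows.
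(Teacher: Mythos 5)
The paper does not actually prove this lemma --- it is quoted from the cited source (Lemma 9.3 in \citet{MR3012035}), and the proof given there is precisely the Frieze--Kannan energy-increment argument you propose. Your core steps are all correct: $W_{G/\PP}$ is the $L^2$-projection of $W_G$ onto $\PP\times\PP$-step functions, so the Pythagorean identity holds; a cut-norm discrepancy of size $\epsilon$ witnessed by $(S,T)$, after refining by $S$ and $T$ (at most a fourfold increase in classes), yields via orthogonality and Cauchy--Schwarz an index increment exceeding $\epsilon^2$; since the index is capped by $1$, the iteration stops within $1/\epsilon^2$ rounds; and the arithmetic $4^{1/\epsilon^2}=k$ gives $\epsilon=\sqrt{2\log 2}/\sqrt{\log k}\le 2/\sqrt{\log k}$, with slack to spare. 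So in substance you have reproduced the proof behind the citation.

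Two points need repair, one of which is a genuine gap. The lemma asks for a partition of the \emph{vertices} of $G$, but your iteration refines by arbitrary measurable witness sets $S,T\subseteq[0,1]$, so what it terminates with is a measurable partition of $[0,1]$, and you never convert it back into a vertex partition. The fix is short: when $\PP$ is a vertex partition, $W_G - W_{G/\PP}$ is a step function whose steps are the $n$ vertex intervals $J_1,\ldots,J_n$, and $\int_{S\times T}(W_G - W_{G/\PP})\,dx\,dy$ is a bilinear function of the overlap vectors $\bigl(|S\cap J_i|/|J_i|\bigr)_i$ and $\bigl(|T\cap J_j|/|J_j|\bigr)_j$ ranging over the box $[0,1]^n\times[0,1]^n$; a bilinear form on a product of boxes attains its maximum (and the maximum of its absolute value) at a vertex of the box, so the witnesses may always be taken to be unions of vertex intervals, i.e., vertex subsets. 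With that choice, every partition produced by the iteration is a vertex partition, and the argument closes. (Equivalently, one can run the whole argument with the finite-graph cut metric, which the paper defines directly as a maximum over vertex subsets $S,T\subseteq[n]$.) Separately, your justification for inflating the class count to exactly $k$ --- ``refinement only helps'' --- is not true for the cut distance: stepping is contractive in the cut norm, which yields only $d_\Box(W_G,W_{G/\PP'})\le 2\,d_\Box(W_G,W_{G/\PP})$ for a refinement $\PP'$ of $\PP$, not monotonicity (monotonicity is a fact about the $L^2$ energy, not about $d_\Box$). Your alternative of padding with empty classes is the correct move and suffices, since empty classes have measure zero and leave $W_{G/\PP}$ unchanged.
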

An analogous statement holds for graphons \citep[Corollary~9.13]{MR3012035}.
Step-functions also approximate graphons arbitrarily well in $L^1$ distance
(or $L^2$, as in MSE or MISE), but the convergence is not uniform in the choice of graphon \citep[Proposition~9.8]{MR3012035}.


\subsection{$L^2$ and the cut distance}
\label{appendix-l2-cut}

In graphon estimation by step-functions, we are given a finite graph sampled from a graphon, and need to choose the number of steps by which to approximate the graphon. Under $L^2$ risk, the number of such steps could be arbitrarily large depending on the particular graphon.  But moreover, this number can vary wildly even among graphons that give rise to similar distributions --- which therefore must be close in the cut distance \citep[Lemma 10.31]{MR3012035}.
For example \citep[Example~10.11]{MR3043217}, this can be seen
by considering a constant function graphon $W$ (whose samples are \ER\ graphs) and the step-function graphon $V_k$ induced by a graph of size $k$ sampled from $W$.
(For an illustration of $W$ and $V_k$, see Figure~\ref{ER-sample}.)
Their samples $\G(n, W)$ and $\G(n, V_k)$ have similar distributions, and
indeed $\delta_\Box(W, V_k) = O(1/\sqrt{k})$,
even though the $L^1$ distance between $W$ and $V_k$ is roughly $1/2$ regardless
of $k$ (and hence $L^2$ also does not decrease in $k$).
For this reason, it may be more appropriate to consider risk based on the cut distance, rather than the $L^2$-based MISE, for the function estimation problem for arbitrary graphons.

On the other hand, both the cut metric and $L^1$ can be close for step-functions \citep[Equation (8.15)]{MR3012035}.
Hence even in $L^1$ (or $L^2$), it can be more reasonable to approximate a step-function graphon (as opposed to an arbitrary graphon) by step-function graphons.

Furthermore, while a large $L^2$ distance between graphons does not guarantee that they are far in the cut distance,
a small $L^2$ distance does provide an upper bound on cut distance (and hence on the total variation distance between the
distributions on small subsamples).
Indeed, in many cases (as in our empirical results here), one is in fact able to produce good bounds on $L^2$.

\begin{figure*}[t]
    \centering
    \includegraphics[scale=.9]{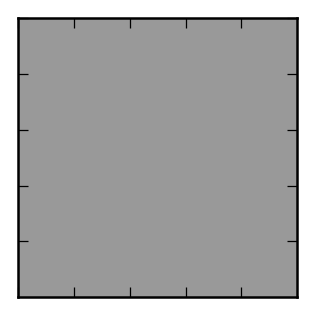}
    \includegraphics[scale=.9]{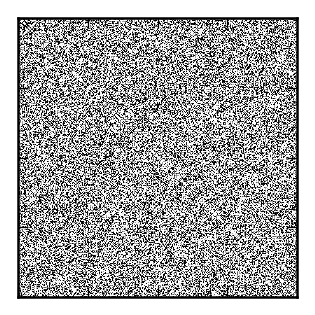}
\caption{Example of graphons that are close in the cut distance but are far in $L^1$ (and $L^2$).
(left) The constant graphon $W = 1/2$. (right) The graphon $V_{1000}$ obtained
as the step-function of a sample from $\G(1000,W)$.
}
\label{ER-sample}
\end{figure*}

\section{Further discussion and analysis of ISFE}

\subsection{ISFE for step-functions of finite graphs}
\label{appendix-finite-graphs}

If one clusters the vertices of a sampled graph discretely, by assigning one class for each vertex, this typically induces a poor graphon estimator, as the reported graphon is merely the step-function of the sample. On the other hand, if we perform even a single iteration of ISFE on such a partition, we obtain an estimator that clusters vertices according to the Hamming distances between their edge vectors. In the case where the original graphon is the step-function of some finite graph on $n$ vertices,
ISFE following such a partition can recover the original
graphon exactly in the sense of MSE, so long as the requested partition size is at least $n$. (Estimation with respect to MISE is limited only by the extent to which the sampled graph distorts the proportion of vertices arising from each vertex of the original.)

We present an example in Figure~\ref{bw-fig}.
In this example, we form the step-function graphon $W_G$ of a graph $G$ with $7$ vertices; its adjacency matrix can be seen via the black and white squares in Figure~\ref{bw-sf}.
ISFE is run on a $70$ vertex sample (Figure~\ref{bw-sampled}) for a single iteration starting from the partition with every vertex in its own class. This single iteration amounts to forming a partition based on Hamming distance on the vector of edges for each vertex (i.e., rows of the adjacency matrix); so long as the requested number of bins is at least 7, the original structure will be recovered.
The resulting graphon from ISFE in Figure~\ref{bw-ham} and the original step-function
graphon in Figure~\ref{bw-sf} are not weakly isomorphic, because some of the 7 steps of the original resulted in slightly more or fewer than 10 vertices in the sample,
but they are very close in the cut distance and, after rearranging by a measure-preserving transformation, in $L^1$. Note that sorting by degree (Figure~\ref{bw-sorted}) garbles the structure among those regions corresponding to distinct vertices of the original having the same degree.

\begin{figure}[t]
    \centering
    \begin{subfigure}[b]{0.2\linewidth}
    \centering
    \includegraphics[scale=.2]{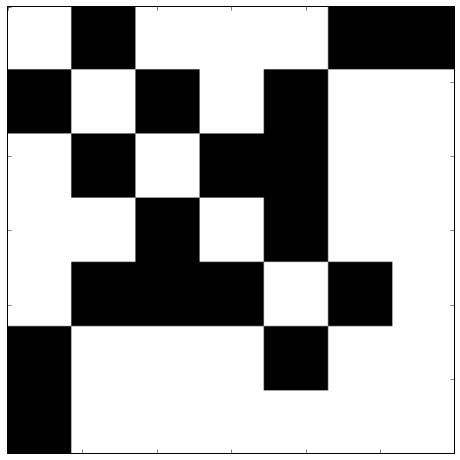}
    \caption{Step-function}
    \label{bw-sf}
    \end{subfigure}
    \begin{subfigure}[b]{0.2\linewidth}
    \centering
    \includegraphics[scale=.2]{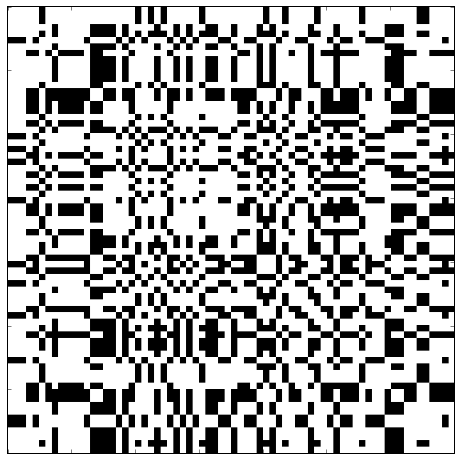}
    \caption{Sampled graph}
	\label{bw-sampled}
    \end{subfigure}
    \begin{subfigure}[b]{0.2\linewidth}
    \centering
    \includegraphics[scale=.2]{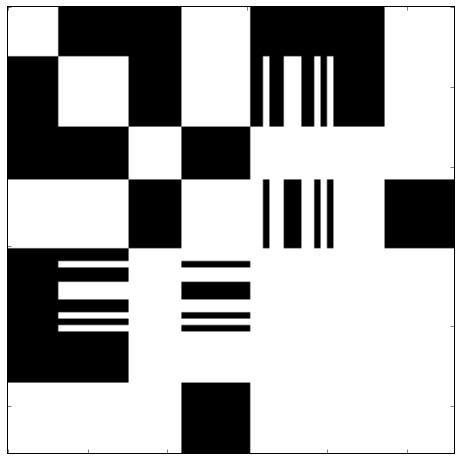}
    \caption{Sorted by degree}
	\label{bw-sorted}
    \end{subfigure}
    \begin{subfigure}[b]{0.2\linewidth}
    \centering
    \includegraphics[scale=.2]{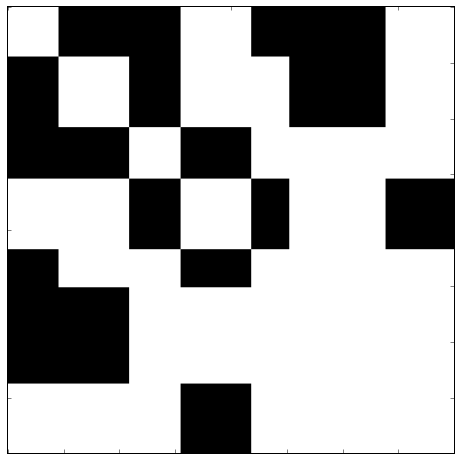}
    \caption{Hamming}
    \label{bw-ham}
    \end{subfigure}
    \caption{
    (a) Black and white step-function graphon of a $7$ vertex graph;
    (b) a 70 vertex sample from the step-function;
    (c) result of sorting the sample by degree;
    (d) result of binning the sample using Hamming distance.
    }
    \label{bw-fig}
\end{figure}

\subsection{ISFE for stochastic block models}
\label{appendix-proof}

We now discuss the behavior of a variant of ISFE on a $2$-step stochastic
block model. We show that given a partition of the vertices, in which a large
portion of the vertices in each class are correctly classified (i.e., the
vertices are in the same class as their true class according to the SBM),
with high probability
the ISFE algorithm will improve (by an arbitrary amount) the fraction of vertices
that are classified correctly.

In this variant of ISFE, centroids are chosen randomly (as described below) rather than greedily; while this makes the algorithm easier to analyze, one expects that ISFE as described and evaluated empirically elsewhere in the paper should perform at least as well in most circumstances.

Let $W$ be a step-function graphon describing a stochastic block model, with $2$ steps $A,B$ of respective sizes $p$ and $1-p$, where $p \leq \frac12$. Let $q_0$ be the edge density within each class and $q_1$ the
edge density across the two classes, and assume $q_0 > q_1$.

Let $G$ be a graph on $[n]$ sampled from $\G(n,W)$. Let $\Gd$ be the (not necessarily simple) graph obtained from $G$ by adding a self-loop at vertex $i$ with probability $q_0$ independently for each $i \in [n]$.
Note that $\Gd$ can be thought of as the result of a modified sampling procedure from $W$, where we instead
assign self-loops with the same probability as other edges within a class.
We will find that many of our calculations are more easily done in terms of $\Gd$.

Considering how each vertex of $G$ was sampled according to $\G(n,W)$,
define the following sets:
\begin{eqnarray*}
[n]_A &\defas& \{x \in [n] \, \st \, x \text{~came from~} A\} \text{~and} \\
\ [n]_B &\defas& \{x \in [n] \, \st \, x \text{~came from~} B\}.
\end{eqnarray*}
Suppose the algorithm begins with a partition $\{C_1,\ldots,C_{k}\}$ of the vertices $[n]$ into $k\ge 2$ classes.
For each $i\in[k]$, define
\[
C^*_i \defas
\begin{cases}
[n]_A  \cap C_i  & \text{if~} \bigl|[n]_A  \cap C_i \bigr| \,\ge\, \bigl|[n]_B  \cap C_i\bigr|,\text{~and} \\
[n]_B  \cap C_i  & \text{otherwise}.
\end{cases}
\]
For $i\in[k]$ we call $C^*_i$ the \defn{majority} of $C_i$,
and for each vertex $x \in C_i$ we say that $x$ is \defn{correctly classified} when $x \in C^*_i$.
Define
\begin{eqnarray*}
K_A &\defas& \{i \in [k] : C^*_i \subseteq [n]_A\}, \text{~and}\\
K_B &\defas& \{i \in [k]: C^*_i \subseteq [n]_B\}.
\end{eqnarray*}
Recall that given a vertex $x \in [n]$, we define its
\defn{weighted edge-density vector} to be
\[
\textstyle
e_x \defas
\bigl
[
\frac{|C_1|}{n}\,
e_G(x, C_1)
, \ldots,
\frac{|C_{k}|}{n}\,
e_G(x, C_{k})
\bigr
]
.
\]

Let $\tau$ be such that
at least a $\tau$-fraction of vertices in $[n]$ are correctly classified by the
partition
$\{C_1,\ldots,C_k\}$, i.e.,
\[
\sum_{i \in [k]}|C^*_i| \geq \tau n.
\]
Given arbitrary $\tau' > \tau$, our goal is to give a lower bound on the
probability that, after applying the variant of ISFE we describe here, the
resulting partition correctly classifies
at least a fraction $\tau'$ of vertices.

We now analyze one iteration of
this variant of ISFE beginning with a partition
$\{C_1,\ldots,C_k\}$.
We create a new partition by first selecting $k$ many centroids
uniformly independently at random from $[n]$ without replacement, and then we
assign every remaining (i.e., non-centroid) vertex $x \in [n]$ to the bin whose
centroid's weighted edge-density vector is closest in $L^1$ to
the weighted edge-density vector $e_x$
(breaking ties uniformly independently at random).

\begin{definition}
For $\delta > 0$, define a class
$C_i$  to be \defn{$\delta$-large} when
$|C_i|\geq \delta \frac{n}{k}$. We say that a $\delta$-large class
$C_i$
is \defn{$\delta$-good} when
$\frac{|C^*_i|}{|C_i|} \geq \delta$  further holds, i.e., a large fraction of its vertices are correctly classified.
Define
\[
D_\delta \defas \{i \in [k] \, \st \, C_i \text{~is~$\delta$-large~and~$\delta$-good}\}.
\]
\end{definition}

Note that for $i\in D_\delta$,
with high probability, the edge density of each vertex $x\in[n]$ with respect to $C_i$ will be close to
its \emph{true} edge density, i.e., $q_0$ if either
\[
x \in [n]_A \text{~and~} C^*_i\subseteq [n]_A
\]
or
\[
x \in [n]_B \text{~and~} C^*_i\subseteq [n]_B,
\]
and $q_1$ otherwise.

\begin{lemma}
\label{Lemma: when always delta-good}
Suppose $C_i$ is $\delta$-large.
If $\delta - \delta^2 \geq k\, (1- \tau)$, then $C_i$ is $\delta$-good.
\end{lemma}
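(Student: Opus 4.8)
The plan is to bound the number of misclassified vertices globally and then localize this bound to the single class $C_i$. First I would observe that the correct-classification hypothesis $\sum_{i \in [k]} |C^*_i| \geq \tau n$ is equivalent to the statement that the total number of misclassified vertices across all classes is at most $(1-\tau)n$. Since $\{C_1,\ldots,C_k\}$ is a partition of $[n]$, we have $n = \sum_{j} |C_j|$, and the misclassified vertices in each $C_j$ are exactly those in $C_j \setminus C^*_j$; these sets are disjoint, so $\sum_j (|C_j| - |C^*_j|) = n - \sum_j |C^*_j| \leq (1-\tau)n$. As every summand is nonnegative, the single term for $C_i$ is dominated by the whole sum, giving the local bound $|C_i| - |C^*_i| \leq (1-\tau)n$.

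Next I would turn this into a lower bound on the correct-classification fraction of $C_i$. Rearranging yields $|C^*_i| \geq |C_i| - (1-\tau)n$, and dividing by $|C_i|$ gives
\[
\frac{|C^*_i|}{|C_i|} \;\geq\; 1 - \frac{(1-\tau)n}{|C_i|}.
\]
This is where the $\delta$-largeness hypothesis enters: since $|C_i| \geq \delta \, \frac{n}{k}$, we have $\frac{(1-\tau)n}{|C_i|} \leq \frac{k(1-\tau)}{\delta}$, and therefore $\frac{|C^*_i|}{|C_i|} \geq 1 - \frac{k(1-\tau)}{\delta}$.

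Finally I would check that the stated inequality $\delta - \delta^2 \geq k(1-\tau)$ is precisely what forces the right-hand side to be at least $\delta$. Indeed, $1 - \frac{k(1-\tau)}{\delta} \geq \delta$ is equivalent, after multiplying through by the positive quantity $\delta$, to $\delta - k(1-\tau) \geq \delta^2$, i.e., to the hypothesis $\delta - \delta^2 \geq k(1-\tau)$. This gives $\frac{|C^*_i|}{|C_i|} \geq \delta$, which together with $\delta$-largeness is exactly the definition of $\delta$-good. There is no genuine obstacle here: the whole argument is a short chain of elementary inequalities, and the only point requiring minor care is the sign when clearing the denominator $\delta > 0$ (the statement being automatically consistent in the degenerate case $\tau = 1$).
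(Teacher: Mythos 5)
Your proposal is correct and follows essentially the same route as the paper's proof: bound the misclassified vertices in $C_i$ by the global bound $n(1-\tau)$, divide by $|C_i|$ and invoke $\delta$-largeness to get $\frac{|C^*_i|}{|C_i|} \geq 1 - \frac{k(1-\tau)}{\delta}$, then observe that the hypothesis $\delta - \delta^2 \geq k(1-\tau)$ is exactly what makes this at least $\delta$. The only difference is that you spell out explicitly why the global misclassification bound localizes to a single class (disjointness of the sets $C_j \setminus C^*_j$), a step the paper passes over with ``by our assumption on $\tau$.''
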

\begin{proof}
By our assumption on $\tau$ we know that
\[
|C_i \setminus C^*_i| \leq n \, (1- \tau).
\]
Hence
\begin{align*}
\frac{|C^*_i|}{|C_i|} &= 1 - \frac{|C_i \setminus C^*_i|}{|C_i|} \ge 1-  \frac{n \, (1- \tau)}{|C_i|} \\
&\ge 1-  \frac{n \, (1- \tau)}{\delta \, \frac{n}{k}} = 1-  \frac{k\, (1- \tau)}{\delta}.
\end{align*}
But $1 - \frac{k\, (1- \tau)}{\delta} \geq \delta$
because $\delta - \delta^2 \geq  k\, (1- \tau)$.
Hence $C_i$ is $\delta$-good.
\end{proof}

In other words, if $k$ is not too large with respect to $\tau$, and $\delta$ is sufficiently large, then every $\delta$-large class must also be $\delta$-good.
Hence if most vertices $x$ are such that
for $\epsilon$ close to $0$ and $\delta$ close to $1$,
for every $\delta$-good
class, the density of $x$ with respect to the majority of that class is within
$\epsilon$ of its expected value, then the
step-function graphon determined by the partition $\{C_1, \ldots, C_k\}$ gives rise to a graphon value estimate of $W$ that
yields a small MSE.
We make this notion precise by defining
$(\epsilon, \delta)$-good vertices.

Throughout the rest of this section, we omit brackets for singleton classes, and write, e.g., $e_G(x, C_i)$ in place of $e_G(\{x\} ,C_i)$.

\vspace*{5pt}
\begin{definition}
For $\epsilon, \delta > 0$ we say that a vertex $x \in [n]_A$ is \defn{$(\epsilon, \delta)$-good} when
for all $i\in D_\delta$, if $i \in K_A$ then
\[
|e_{\Gd}(x, C^*_i) - q_0| < \epsilon
\]
and if $i \in K_B$ then
\[
|e_{\Gd}(x, C^*_i) - q_1| < \epsilon.
\]
Similarly, we say that a vertex $x \in [n]_B$ is
\defn{$(\epsilon, \delta)$-good} when
for all $i\in D_\delta$, if $i\in K_B$ then
\[
|e_{\Gd}(x, C^*_i) - q_0| < \epsilon
\]
and if $i \in K_A$ then
\[
|e_{\Gd}(x, C^*_i) - q_1| < \epsilon.
\]
We let $\GG_{\epsilon, \delta}(x)$ be the event that $x$ is $(\epsilon, \delta)$-good.
\end{definition}

In other words, $x$ is $(\epsilon, \delta)$-good if for every $\delta$-good
class, the density of $x$ with respect to the majority of that class is within
$\epsilon$ of its expected value.

We will begin by showing that if each of the centroids is
$(\epsilon, \delta)$-good (for an appropriate $\epsilon$ and $\delta$),
then each $(\epsilon, \delta)$-good
vertex is correctly classified. This will then reduce
the task of giving
bounds on the probability that at least $\tau' n$ vertices are correctly
classified to that of giving bounds on the probability that at least $\tau' n$ vertices are $(\epsilon, \delta)$-good.

\begin{proposition}
\label{key-expression-prop}
Suppose that each centroid is $(\epsilon, \delta)$-good, and that
at least one centroid is in $[n]_A$ and at least one centroid is in $[n]_B$.
Further suppose that
\[
\textstyle
q_0-q_1 \geq 2 (\frac{2-\delta}\tau -1 + 2\epsilon + \frac{k}{n\delta\tau} ).
\]
Then the weighted edge-density vector of each $(\epsilon, \delta)$-good vertex
of $[n]_A$ is closer in $L^1$ to that of some centroid in $[n]_A$ than to that
of any centroid in $[n]_B$.
Similarly, the weighted edge-density vector of each $(\epsilon,\delta)$-good
vertex of $[n]_B$ is closest to that of a centroid in $[n]_B$.
\end{proposition}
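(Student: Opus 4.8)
The plan is to fix an arbitrary $(\epsilon,\delta)$-good vertex $x\in[n]_A$, fix any centroid $u\in[n]_A$ (which is $(\epsilon,\delta)$-good by hypothesis, and which exists since at least one centroid lies in $[n]_A$), and show that $\norm{e_x-e_u}_1 < \norm{e_x-e_v}_1$ for \emph{every} centroid $v\in[n]_B$; this exhibits an $A$-centroid nearer to $x$ than any $B$-centroid, and the dual statement for $[n]_B$ follows by exchanging the roles of $A$ and $B$. Since $\norm{e_y-e_z}_1 = \sum_{i\in[k]}\frac{|C_i|}{n}\,|e_G(y,C_i)-e_G(z,C_i)|$, I would carry out the comparison coordinate by coordinate, separating the classes $i\in D_\delta$ (where $(\epsilon,\delta)$-goodness grants density control) from those outside $D_\delta$ (bounded crudely by their total weight).

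First I would establish a per-class density estimate on each $i\in D_\delta$. For a good vertex $y$, I would pass from the raw quantity $e_G(y,C_i)$ to the clean quantity $e_{\Gd}(y,C^*_i)$ through three controlled approximations: (a) replacing $G$ by $\Gd$ alters $c_G(y,C_i)$ by at most one self-loop, costing at most $1/|C_i|$ pointwise, hence $1/n$ after weighting by $\frac{|C_i|}{n}$; (b) replacing $C_i$ by its majority $C^*_i$ costs at most $|C_i\setminus C^*_i|/|C_i|$, controlled because $\delta$-goodness gives $|C^*_i|/|C_i|\ge\delta$; and (c) the event $\GG_{\epsilon,\delta}(y)$ places $e_{\Gd}(y,C^*_i)$ within $\epsilon$ of $q_0$ when $y$ and $C^*_i$ lie in the same block and within $\epsilon$ of $q_1$ otherwise. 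Combining these, each good vertex's density on $C_i$ lies within $\epsilon + |C_i\setminus C^*_i|/|C_i| + 1/|C_i|$ of its ideal value $q_0$ or $q_1$.

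The signal is then immediate: since $x$ and $u$ lie in the same block, their ideal values agree on every $i\in D_\delta$, so $|e_G(x,C_i)-e_G(u,C_i)|$ is small there; since $x$ and $v$ lie in opposite blocks, their ideal values differ by exactly $q_0-q_1$, forcing $|e_G(x,C_i)-e_G(v,C_i)|\ge (q_0-q_1)$ minus the same error. Summing the $\frac{|C_i|}{n}$-weighted contributions over $i\in D_\delta$, the weighted impurity terms collapse into the global bound $\sum_i |C_i\setminus C^*_i|/n \le 1-\tau$ furnished by the $\tau$-fraction-correct hypothesis, the weighted self-loop terms contribute at most $k/n$ in total, and the coordinates outside $D_\delta$ are dominated by their total weight. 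The hypothesized inequality $q_0-q_1\ge 2\bigl(\frac{2-\delta}{\tau}-1+2\epsilon+\frac{k}{n\delta\tau}\bigr)$ is exactly the threshold at which the per-class separation $q_0-q_1$ on the classes in $D_\delta$ outweighs the accumulated slack, yielding $\norm{e_x-e_u}_1<\norm{e_x-e_v}_1$.

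The main obstacle is this final aggregation: a single coordinate carries a clean gap of size $q_0-q_1$, but it must survive summation against three sources of error — majority impurity, the self-loop correction, and the uncontrolled coordinates outside $D_\delta$ — each scaled by the class weights $\frac{|C_i|}{n}$. Keeping the out-of-$D_\delta$ weight and the impurity small relative to the signal is precisely where the interplay between $\delta$, $\tau$, and $k$ enters; here I would invoke Lemma~\ref{Lemma: when always delta-good} so that every $\delta$-large class is automatically $\delta$-good (whence $D_\delta$ captures all sufficiently large classes), and balancing the per-class error against the separation reproduces the stated constant with its $\frac1\tau$ and $\frac1{\delta}$ factors.
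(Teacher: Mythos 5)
Your proposal follows the same route as the paper's proof: compare weighted edge-density vectors coordinate by coordinate, use $(\epsilon,\delta)$-goodness on the classes in $D_\delta$ after correcting for self-loops (passing between $G$ and $\Gd$) and for the impure minority $C_i\setminus C^*_i$, bound the coordinates outside $D_\delta$ crudely by their weight, and rearrange the resulting separation inequality into the hypothesis on $q_0-q_1$. However, two steps in your final aggregation do not go through as described. The first is the one you yourself flag as the main obstacle: the total weight of the classes outside $D_\delta$. Lemma~\ref{Lemma: when always delta-good} only converts $\delta$-largeness into $\delta$-goodness; it says nothing about how much mass can sit in classes that are \emph{not} $\delta$-large, and in principle that mass can be enormous (e.g.\ $k-1$ classes each of size just under $\delta\frac{n}{k}$ all lie outside $D_\delta$ yet can contain the vast majority of vertices, since $(k-1)\delta\frac{n}{k}\approx\delta n$). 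What your argument needs, and what the paper's proof explicitly supplies at the corresponding point, is the assertion that at most $(1-\delta)n$ vertices lie in classes outside $D_\delta$; that quantity is the source of the $(2-\delta)$-type term in the stated hypothesis. Invoking the lemma cannot close this gap, so as written your proof is missing a necessary quantitative input.

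The second problem is the constants. Your symmetric per-class estimate (each good vertex's density on $C_i$ lies within $\epsilon + |C_i\setminus C^*_i|/|C_i| + 1/|C_i|$ of its ideal value) double-counts the impurity when you take differences: each coordinate picks up $2\,|C_i\setminus C^*_i|/|C_i|$, the weighted sums give $4(1-\tau)$ of accumulated error across the two distance bounds, and your signal term is $(q_0-q_1)$ times the weight of $D_\delta$. The paper instead works with one-sided bounds of the form $\tau(q-\epsilon)\le e_{\Gd}(\cdot,C_i)\le \tau(q+\epsilon)+1-\tau$, in which the impure minority can only inflate the upper end; impurity is therefore paid once per class, the signal retains a factor $\tau$, and the separation inequality rearranges exactly to $q_0-q_1 \geq 2\bigl(\frac{2-\delta}{\tau}-1+2\epsilon+\frac{k}{n\delta\tau}\bigr)$. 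Carried out honestly, your bookkeeping yields a strictly stronger sufficient condition (roughly $4(1-\tau)$ in place of $2(1-\tau)$, $4\epsilon$ in place of $4\tau\epsilon$, and a signal coefficient of at best $\delta\le\tau$), so the claim that balancing the errors ``reproduces the stated constant'' is not correct: under the proposition's hypothesis as written, your chain of inequalities does not deliver $\norm{e_x-e_u}_1 < \norm{e_x-e_v}_1$. To land on the actual statement you would need to redo the last step with the paper's asymmetric per-class bounds rather than your symmetric ones.
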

\begin{proof}
For each $(\epsilon, \delta)$-good vertex $d_A \in [n]_A$, and each $(\epsilon,\delta)$-good vertex $d_B \in [n]_B$, each $i_A \in K_A\cap D_\delta$, and $i_B \in K_B \cap D_\delta$ we have
\begin{align*}
    \tau(q_0 - \epsilon) &\leq e_\Gd(d_A, C_{i_A}) \leq \tau(q_0 + \epsilon) + 1-\tau, \\
    \tau(q_1 - \epsilon) &\leq e_\Gd(d_A, C_{i_B}) \leq \tau(q_1 + \epsilon) + 1-\tau, \\
    \tau(q_1 - \epsilon) &\leq e_\Gd(d_B, C_{i_A}) \leq \tau(q_1 + \epsilon) + 1-\tau, \text{~and}\\
    \tau(q_0 - \epsilon) &\leq e_\Gd(d_B, C_{i_B}) \leq \tau(q_0 + \epsilon) + 1-\tau.
\end{align*}
Hence we also have
\begin{align*}
\textstyle
    \tau(q_0 - \epsilon) - \frac{1}{|C_{i_A}|} &\leq e_G(d_A, C_{i_A}) \leq \tau(q_0 + \epsilon) + 1-\tau, \\
\textstyle
    \tau(q_1 - \epsilon) - \frac{1}{|C_{i_B}|} &\leq e_G(d_A, C_{i_B}) \leq \tau(q_1 + \epsilon) + 1-\tau, \\
\textstyle
    \tau(q_1 - \epsilon) - \frac{1}{|C_{i_A}|} &\leq e_G(d_B, C_{i_A}) \leq \tau(q_1 + \epsilon) + 1-\tau, \text{~and}\\
\textstyle
    \tau(q_0 - \epsilon) - \frac{1}{|C_{i_B}|} &\leq e_G(d_B, C_{i_B}) \leq \tau(q_0 + \epsilon) + 1-\tau,
\end{align*}
where the differences come from the fact that in $\Gd$ we may add self-loops, and that $d_A$ and $d_B$ are themselves in some class. Further, as $|C_i| \geq \frac{n}{k} \delta$, we have
\begin{align*}
\textstyle
    \tau(q_0 - \epsilon) - \frac{k}{n\delta} &\leq e_G(d_A, C_{i_A}) \leq \tau(q_0 + \epsilon) + 1-\tau, \\
\textstyle
    \tau(q_1 - \epsilon) - \frac{k}{n\delta}  &\leq e_G(d_A, C_{i_B}) \leq \tau(q_1 + \epsilon) + 1-\tau, \\
\textstyle
    \tau(q_1 - \epsilon) - \frac{k}{n\delta}  &\leq e_G(d_B, C_{i_A}) \leq \tau(q_1 + \epsilon) + 1-\tau, \text{~and}\\
\textstyle
    \tau(q_0 - \epsilon) - \frac{k}{n\delta}  &\leq e_G(d_B, C_{i_B}) \leq \tau(q_0 + \epsilon) + 1-\tau.
\end{align*}

We first consider the $L^1$ distance between the weighted edge-density vectors
of a vertex from $[n]_A$ and a vertex $x$.
Again assume the vertex $d_A \in [n]_A$ is $(\epsilon, \delta)$-good,
and suppose $x\in [n]_A$ is an $(\epsilon, \delta)$-good vertex.
For $i_A \in K_A$ we have
\begin{eqnarray*}
|e_G(x, C_{i_A}) - e_G(d_A, C_{i_A})|
&\leq& (\tau (q_0 + \epsilon) + 1-\tau) - \tau(q_0-\epsilon) +
    \tfrac{k}{n\delta}\\
    &=& 2\tau\epsilon + 1-\tau + \tfrac{k}{n\delta},
\end{eqnarray*}
and for $i_B \in K_B$ we have
\begin{eqnarray*}
|e_G(x, C_{i_B}) - e_G(d_A, C_{i_B})|
&\leq& (\tau (q_1 + \epsilon) + 1-\tau) - \tau(q_1-\epsilon) +
    \tfrac{k}{n\delta}\\
    &=&  2\tau\epsilon + 1-\tau + \tfrac{k}{n\delta}.
\end{eqnarray*}
Further,
\[
\textstyle
(k-1)\, (1-\delta)\, \frac{n}{k} < (1-\delta)\, n,
\]
and so at most $(1-\delta) \, n$ many vertices are not in classes with
respect to which they are $\delta$-good.
Hence the $L^1$ distance between the weighted edge-density vectors
\[
\textstyle
e_x =
\bigl
[
\frac{|C_1|}{n}\,
e_G(x, C_1)
, \ldots,
\frac{|C_k|}{n}\,
e_G(x, C_k)
\bigr
]
\]
and
\[
\textstyle
e_{d_A} =
\bigl
[
\frac{|C_1|}{n}\,
e_G(d_A, C_1), \ldots,
\frac{|C_k|}{n}\,
e_G(d_A, C_k)
\bigr
]
\]
is equal to
\begin{align*}
    \frac1n\sum_{i\in[k]} |C_i|\, \bigl|e_G(x, C_i) - e_G(d_A,C_i) \bigr|,
\end{align*}
which is at most
\[
\textstyle
\frac1n \bigl(
(1-\delta)\, n
+
\sum_{i\in[k]} |C_i|\,
(2\tau\epsilon + 1-\tau + \frac{k}{n\delta})
\bigr)
.
\]
Hence the $L^1$ distance is at most
\[
(1- \delta) +
2\tau\epsilon + 1-\tau + \frac{k}{n\delta},
\]
as $\sum_{i\in[k]} |C_i| = n$.

Now let $d_B \in [n]_B$ be $(\epsilon,\delta)$-good.
We now consider the $L^1$ distance between the weighted edge-density vectors
$e_{d_B}$ and $e_x$.
As $q_0 > q_1$, for $i_A \in K_A$ we have
\begin{eqnarray*}
|e_G(x, C_{i_A}) - e_G(d_B, C_{i_A})|
&\geq& \tau(q_0 - \epsilon) - \tfrac{k}{n\delta} - (\tau(q_1+\epsilon) + 1-\tau))\\
    &=& \tau(q_0-q_1 - 2\epsilon) - \tfrac{k}{n\delta} - 1+\tau,
\end{eqnarray*}
and for $i_B \in K_B$ we have
\begin{eqnarray*}
|e_G(x, C_{i_B}) - e_G(d_B, C_{i_B})|
&\geq& \tau(q_0 - \epsilon) - \tfrac{k}{n\delta} - (\tau(q_1+\epsilon) + 1-\tau))\\
    &=& \tau(q_0-q_1 - 2\epsilon) - \tfrac{k}{n\delta} - 1+\tau.
\end{eqnarray*}
Therefore the $L^1$-distance between the weighted edge-density vectors $e_x$ and
\[
\textstyle
e_{d_B} =
\bigl
[
\frac{|C_1|}{n}\,
e_G(d_B, C_1), \ldots,
\frac{|C_k|}{n}\,
e_G(d_B, C_k)
\bigr
]
\]
is at least
\[
\textstyle
\frac1n \bigl(
- (1- \delta) n
+
\sum_{i\in[k]} |C_i|\,
(\tau(q_0-q_1 - 2\epsilon) - \frac{k}{n\delta} - 1+\tau)
\bigr)
\ = \
- (1-\delta) +
\tau(q_0-q_1 - 2\epsilon) - \frac{k}{n\delta} - 1+\tau.
\]

In particular, if
\begin{eqnarray*}
\textstyle
- (1-\delta) +
\tau(q_0-q_1 - 2\epsilon) - \frac{k}{n\delta} - 1+\tau
\geq
(1-\delta) +
2\tau\epsilon + 1-\tau + \frac{k}{n\delta},
\end{eqnarray*}
then $e_x$ is closer in $L^1$ to $e_{d_A}$ than to $e_{d_B}$
whenever $d_A \in [n]_A$ and $d_B\in [n]_B$ are $(\epsilon, \delta)$-good.
But this inequality is equivalent to our hypothesis,
\begin{eqnarray*}
\textstyle
q_0-q_1
\geq
2 (\frac{2-\delta}\tau -1 +
2\epsilon + \frac{k}{n\delta\tau} ).
\end{eqnarray*}
Hence, as we have assumed that all centroids are $(\epsilon, \delta)$-good,
the vertex $x$ is placed in a class whose centroid is in $[n]_A$.
A similar argument shows that if $x \in [n]_B$ is $(\epsilon, \delta)$-good
then $x$ is placed in a class whose centroid is in $[n]_B$.
\end{proof}

Recall that $\GG_{\epsilon, \delta}(x)$ is the event that a vertex $x$ is
$(\epsilon,\delta)$-good.
Notice that for each $x_A, y_A \in [n]_A$,
\[
\Pr(\GG_{\epsilon, \delta}(x_A)) = \Pr(\GG_{\epsilon, \delta}(y_A)),
\]
and for each $x_B, y_B \in [n]_B$,
\[\Pr(\GG_{\epsilon, \delta}(x_B)) = \Pr(\GG_{\epsilon, \delta}(y_B)).
\]
We now want to give a lower bound for these values.
For $n \in \Nats$ and  $\zeta \in [0,1]$,
let $X_{n, \zeta} \sim \frac1n \Binomial(n, \zeta)$, and for $\epsilon > 0$ define
\[
\Bin(n, \zeta, \epsilon) \defas \Pr\bigl(\bigl|X_{n, \zeta} - \zeta\bigr| \,<\, \epsilon\bigr).
\]
Note that we also have
\[
\Bin(n, \zeta, \epsilon) = \Pr\bigl(\bigl|n \, X_{n, \zeta} - n \, \zeta\bigr| \,<\, n\, \epsilon\bigr),
\]
i.e., the probability that the number of successful trials differs from the expected number by at most $n\, \epsilon$.

It is then easily checked, using Chernoff's bound, that the following inequality holds:
\[
\Bin(n, \zeta, \epsilon) \geq 1 - 2 \, \exp\left\{ -\frac{\epsilon^2\, n}{3\zeta} \right\}.
\]
We now use this inequality to bound the probability that a given vertex is
$(\epsilon,\delta)$-good.

\begin{lemma}
\label{Lemma: Lower bound prob good}
For all $x \in [n]$,
\[
\Pr\bigl(\GG_{\epsilon, \delta}(x) \bigr) \ge \left(1 - 2 \, \exp\left\{ -\frac{\epsilon^2\, \delta^2 n}{3k} \right\}\right)^k.
\]
\end{lemma}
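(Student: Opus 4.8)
The plan is to condition on the block assignment and thereby reduce the claim to a product of independent binomial concentration events, one per $\delta$-good class. First I would condition on the partition of $[n]$ into $[n]_A$ and $[n]_B$ (equivalently, on the block labels induced by the latent values $U_i$). Once this is fixed, the classes $C_i$, their majorities $C^*_i$, the index sets $K_A, K_B$, and hence $D_\delta$ are all determined, and the only remaining randomness is in the edges of $\Gd$, which are mutually independent. Since the bound I am aiming for does not depend on the block assignment, it suffices to prove the inequality conditionally and then average.

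Next, for each $i \in D_\delta$ I would identify the conditional distribution of $e_{\Gd}(x, C^*_i)$. Writing $q^* \in \{q_0, q_1\}$ for the expected density prescribed by the definition of $(\epsilon,\delta)$-goodness (namely $q_0$ when $x$ and $C^*_i$ lie in the same block and $q_1$ otherwise), the count $c_{\Gd}(x, C^*_i) = \sum_{j \in C^*_i} \Gd_{xj}$ is a sum of $|C^*_i|$ independent $\Bernoulli(q^*)$ indicators. This is exactly where the passage to $\Gd$ pays off: when $x \in C^*_i$ the self-loop term $\Gd_{xx}$ occurs with probability $q_0$, matching the intra-block density, so the sum is cleanly $\Binomial(|C^*_i|, q^*)$ with no off-by-one correction. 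Consequently the event in the definition of $(\epsilon,\delta)$-goodness attached to class $i$, namely $|e_{\Gd}(x, C^*_i) - q^*| < \epsilon$, has conditional probability exactly $\Bin(|C^*_i|, q^*, \epsilon)$, which by the stated Chernoff inequality is at least $1 - 2\exp\{-\epsilon^2 |C^*_i|/(3q^*)\}$. Since $i \in D_\delta$ forces $|C^*_i| \ge \delta |C_i| \ge \delta^2 n/k$, and since $q^* \le 1$, each such probability is bounded below uniformly by $1 - 2\exp\{-\epsilon^2 \delta^2 n/(3k)\}$.

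Then I would assemble the per-class bounds into the claimed product form. The event $\GG_{\epsilon, \delta}(x)$ is precisely the intersection over $i \in D_\delta$ of the events $|e_{\Gd}(x, C^*_i) - q^*| < \epsilon$. Because the majorities $C^*_i$ for distinct $i$ are disjoint (being subsets of disjoint classes), the indicators $\{\Gd_{xj}\}_{j \in C^*_i}$ governing different classes involve disjoint edges of $\Gd$ and are therefore independent conditionally on the block assignment. Hence the conditional probability of the intersection factorizes, and using $|D_\delta| \le k$ together with the fact that the common lower bound lies in $[0,1]$ (the inequality being trivial otherwise, since a probability is nonnegative), this product is at least $(1 - 2\exp\{-\epsilon^2\delta^2 n/(3k)\})^k$. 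Averaging over the block assignment preserves this uniform bound and yields the lemma.

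The main obstacle is the independence step. The stated bound is the product $(1 - 2\exp\{\cdots\})^k$, which is strictly stronger than the $1 - 2k\exp\{\cdots\}$ one would obtain from a union bound over the $k$ classes, so a genuine independence argument — not merely disjointness inside a union bound — must be supplied. The conditioning above is the cleanest route: after fixing the block assignment, independence follows from the disjointness of the edge sets $\{x\}\times C^*_i$ and the mutual independence of edges in $\Gd$, while the self-loop modification defining $\Gd$ is exactly what keeps each $e_{\Gd}(x, C^*_i)$ an honest binomial average in the boundary case $x \in C^*_i$.
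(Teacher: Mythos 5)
Your proposal is correct and takes essentially the same approach as the paper's proof: per-class Chernoff concentration of $e_{\Gd}(x, C^*_i)$ around $q_0$ or $q_1$ (using $|C^*_i| \ge \delta^2 n/k$ and $q^* \le 1$), independence across classes coming from the disjoint edge sets $\{x\} \times C^*_i$ in $\Gd$ (with the self-loops making each density an exact binomial average), and a product over the at most $k$ classes in $D_\delta$. The only difference is bookkeeping: you condition once on the full block assignment, whereas the paper conditions classwise on the events $\fourEvent{J}{H}{x}{i}$ (that $x \in [n]_J$ and $i \in K_H$) and then invokes the same conditional independence --- a presentational, not substantive, distinction.
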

\begin{proof}
Note that when $i \in K_A$ is such that $C_i$ is $\delta$-good, we have $\frac{|C^*_i|}{|C_i|} \geq \delta$ and $|C_i|\geq \delta \frac{n}{k}$, and so $|C^*_i| \geq \delta^2 \frac{n}{k}$.

Let $\fourEvent{J}{H}{x}{i}$ denote the event that
$x \in [n]_J$ and $i \in K_H$, where $J, H \in \{A, B\}$.
Observe that conditioning on $\fourEvent{A}{A}{x}{i}$,
we have that $e_\Gd(x, C^*_i)$ has the same distribution as
$X_{|C^*_i|, q_0}$, and so
$\Expect\bigl(e_\Gd(x,C^*_i) \, \big |\, \fourEvent{A}{A}{x}{i}\bigr) = q_0$ and
\[
\Expect\bigl(c_\Gd(x,C^*_i)\, \big |\, \fourEvent{A}{A}{x}{i} \bigr)
= \Expect\bigl( |C^*_i| \, e_\Gd(x,C^*_i) \, \big |\, \fourEvent{A}{A}{x}{i} \bigl)
= q_0\,|C^*_i|.
\]
We therefore also have
\begin{eqnarray*}
\Pr\bigl(| e_\Gd(x,C^*_i) - q_0 | < \epsilon \ \big |\ \fourEvent{A}{A}{x}{i} \bigr) &=& \Bin(|C^*_i|, q_0, \epsilon) \\
	&\geq& 1 - 2 \, \exp\left\{ -\frac{\epsilon^2\, |C^*_i|}{3 q_0} \right\} \\
         &\geq& 1 - 2 \, \exp\left\{ -\frac{\epsilon^2\, |C^*_i|}{3} \right\} \\
         &\geq& 1 - 2 \, \exp\left\{ -\frac{\epsilon^2\, \delta^2 n}{3k} \right\}.
\end{eqnarray*}

A similar argument shows that we have
\[
\Pr\bigl(| e_\Gd(x,C^*_i) - q_1 | < \epsilon \ \big |\ \fourEvent{A}{B}{x}{i} \bigr)  \geq
1 - 2 \, \exp\left\{ -\frac{\epsilon^2\, \delta^2 n}{3k} \right\},
\]
\[
\Pr\bigl(| e_\Gd(x,C^*_i) - q_1 | < \epsilon \ \big |\ \fourEvent{B}{A}{x}{i} \bigr)  \geq
1 - 2 \, \exp\left\{ -\frac{\epsilon^2\, \delta^2 n}{3k} \right\},
\]
and
\[
\Pr\bigl(| e_\Gd(x,C^*_i) - q_0 | < \epsilon \ \big |\ \fourEvent{B}{B}{x}{i} \bigr)  \geq
1 - 2 \, \exp\left\{ -\frac{\epsilon^2\, \delta^2 n}{3k} \right\}.
\]

For a given $x$ and function $Z \colon [k] \to \{q_0,q_1\}$, the events
\[
    \bigl\{| e_\Gd(x,C^*_i) - Z(i) | < \epsilon \   \st \  i \in [k] \bigr\}
\]
are independent.
Hence,
since $|D_\delta|\leq k$,
for any $x \in [n]$
 we have the lower bound
\[
\left(1 - 2 \, \exp\left\{ -\frac{\epsilon^2\, \delta^2 n}{3k} \right\}\right)^k
\leq
\Pr(\GG_{\epsilon, \delta}(x))
\]
on the probability that $x$ is $(\epsilon, \delta)$-good.
\end{proof}

Proposition~\ref{key-expression-prop}
reduces the problem of bounding the probability that a large number of
vertices are correctly classified to that of bounding the probability that
(for appropriate $\epsilon, \delta$) all centroids are $(\epsilon, \delta)$-good
and that a large fraction of vertices are $(\epsilon, \delta)$-good.

Lemma~\ref{Lemma: Lower bound prob good}
bounds the probability that any single vertex is $(\epsilon, \delta)$-good.
If $x_1, \dots, x_r$ were such that the events
$\GG_{\epsilon, \delta}(x_1), \dots, \GG_{\epsilon, \delta}(x_r)$ were
independent, then this would yield a bound on the probability that
$\bigwedge_{ i \in [r]}\GG_{\epsilon, \delta}(x_i)$ holds.

In general, though, the events
$\GG_{\epsilon, \delta}(x_1), \dots, \GG_{\epsilon, \delta}(x_r)$ are
not independent --- and indeed they can interact in a complicated way.
However, conditioning on
$\GG_{\epsilon, \delta}(x_1), \dots, \GG_{\epsilon, \delta}(x_r)$
can only increase the probability that a given $\GG_{\epsilon, \delta}(y)$ holds,
as we now make precise.

\begin{lemma}
Suppose $x_1, \dots, x_r, y \in [n]$. Then
\[
\textstyle
\Pr(\GG_{\epsilon, \delta}(y)) \leq
\Pr\bigl( \GG_{\epsilon, \delta}(y) \, \big | \, \bigwedge_{i \in [r]}
\GG_{\epsilon, \delta}(x_i)\bigr).
\]
\end{lemma}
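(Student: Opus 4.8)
The plan is to recast the inequality as a statement of positive correlation. Since conditioning on an event can only raise a probability when that event is nonnegatively correlated with the target, it suffices to show
\[
\Pr\bigl(\GG_{\epsilon,\delta}(y)\cap{\textstyle\bigwedge_{i\in[r]}}\GG_{\epsilon,\delta}(x_i)\bigr)\ \ge\ \Pr\bigl(\GG_{\epsilon,\delta}(y)\bigr)\,\Pr\bigl({\textstyle\bigwedge_{i\in[r]}}\GG_{\epsilon,\delta}(x_i)\bigr).
\]
I would condition throughout on the block structure, so that $[n]_A,[n]_B$, the majorities $C^*_i$, the index sets $K_A,K_B,D_\delta$, and hence all the target means are fixed, and treat the edges of $\Gd$ as the underlying independent randomness.

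The key structural observation is that $\GG_{\epsilon,\delta}(z)$ is measurable with respect only to the edges joining $z$ to the union of majorities $M \defas \bigcup_{i\in D_\delta} C^*_i$; indeed it factorizes over classes, being the intersection over $i\in D_\delta$ of the events that $e_{\Gd}(z,C^*_i)$ lies within $\epsilon$ of its target mean ($q_0$ or $q_1$). First I would dispose of the case $y\notin M$: then every edge determining $\GG_{\epsilon,\delta}(y)$ is incident to $y$ and lands in $M$, whereas no such edge occurs in any $\GG_{\epsilon,\delta}(x_i)$, so the two events depend on disjoint sets of independent edges and are therefore independent, giving equality.

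The substance is the case $y\in M$. Here the only edge shared between $\GG_{\epsilon,\delta}(y)$ and a given $\GG_{\epsilon,\delta}(x_i)$ is the single indicator $\Gd_{y,x_i}$, and this sharing occurs only when $x_i\in M$. Writing $S$ for the collection of these shared edges, I would integrate out all remaining (private) randomness to reduce the covariance to $\mathrm{Cov}\bigl(\tilde f(S),\tilde g(S)\bigr)$, where $\tilde f(S)\defas\Pr\bigl(\GG_{\epsilon,\delta}(y)\mid S\bigr)$ and $\tilde g(S)\defas\Pr\bigl(\bigwedge_{i}\GG_{\epsilon,\delta}(x_i)\mid S\bigr)$ are functions of the shared-edge vector alone. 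This reduction is clean because, once $S$ is fixed, the further $y$-incident edges governing $\GG_{\epsilon,\delta}(y)$ and the further $x_i$-incident edges governing the conditioning events are disjoint, hence conditionally independent, so the conditional expectation of the product factorizes.

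The hard part, and the step I expect to be the main obstacle, is showing $\mathrm{Cov}(\tilde f,\tilde g)\ge 0$. The natural tool is the Harris/FKG inequality, but it does not apply off the shelf: a goodness event is a \emph{two-sided} concentration event, so $\tilde f$ and $\tilde g$ are unimodal rather than monotone in the shared coordinates, and one can check that their coordinatewise monotonicity directions need not agree when several conditioning vertices land in one class. The reason the correlation is nonetheless nonnegative is that the shared edge $\Gd_{y,x_i}$ enters the density of $y$ against $x_i$'s class and the density of $x_i$ against $y$'s class with the \emph{same} target mean (both $q_0$ if $y$ and $x_i$ share a block, both $q_1$ otherwise); conditioning on the $x_i$-events therefore biases each shared edge toward the value typical for that mean, which is exactly the bias that helps $y$ satisfy its own concentration condition. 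I would make this precise first in the clean regime of at most one conditioning vertex per majority class, where each shared coordinate enters a single factor of both $\tilde f$ and $\tilde g$ and a direct sign comparison (the discrete derivatives agree in sign, governed only by whether the common mean exceeds $\tfrac12$) yields the Harris condition, and then handle general multiplicity by conditioning on the within-class shared-edge totals together with an induction on $r$, which I expect to be the most delicate bookkeeping in the argument.
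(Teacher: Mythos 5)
You have reproduced the architecture of the paper's own proof: group the $x_i$ by the class containing them, split $\GG_{\epsilon,\delta}(y)$ into per-class concentration events, use independence of the edge sets attached to distinct classes, and reduce everything to a per-class positive-correlation claim through the shared edges $\Gd_{y,x_i}$. Your reduction to $\mathrm{Cov}(\tilde f,\tilde g)\ge 0$, and your observation that the $y\notin M$ case gives exact independence, are both correct; you are also right that Harris/FKG does not apply to two-sided concentration events. (For comparison, the paper never engages with this point: its proof simply asserts the inequality $\Pr(\mathfrak{H}_i) \le \Pr(\mathfrak{H}_i \given \mathfrak{J}_i)$ --- which is exactly your covariance claim --- as an unproved observation.)

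The gap is that your proposed proof of that claim is not just incomplete but wrong at its key step. The sign of the discrete derivative of a two-sided concentration probability in a shared Bernoulli coordinate is \emph{not} governed only by whether the common mean exceeds $\tfrac12$: it also depends on how the window $m(q\pm\epsilon)$ meets the integer lattice, hence on the majority size $m$. Concretely, let $f_m(v) \defas \Pr\bigl(|v+R_m - mq|<m\epsilon\bigr)$ with $R_m\sim\frac{1}{1}\Binomial(m-1,q)$, and take $q=0.6$, $\epsilon=0.3$. Then $f_2(0)=0.6>0.4=f_2(1)$, whereas $f_4(1)=0.72>0.648=f_4(0)$. So if $x_1$'s majority has size $2$ and $y$'s has size $4$, with $y$ and $x_1$ in different blocks and $q_1=0.6$ (so both shared-edge targets equal $0.6$, and $q_0>q_1$ can still hold), the two discrete derivatives have opposite signs. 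Since for a single shared edge $X$ the covariance equals $c\,c'\,(f(1)-f(0))(g(1)-g(0))\Pr(X=1)\Pr(X=0)$ with $c,c'>0$ the contributions of the unshared classes, conditioning on $\GG_{\epsilon,\delta}(x_1)$ \emph{strictly decreases} $\Pr(\GG_{\epsilon,\delta}(y))$ in this configuration. This shows not only that your ``clean regime'' sign comparison fails (so the deferred induction on $r$ has nothing to start from), but that the positive correlation --- and hence the lemma itself, read as both you and the paper read it, with the block structure and partition held fixed --- can fail outright when $|C^*_i|\,\epsilon$ is of order $1$. Any correct argument must therefore invoke quantitative hypotheses tying $\epsilon$, $\delta$, $n$, $k$ together, which neither your sketch nor the paper's one-line assertion supplies; your instinct that this step is the main obstacle was exactly right, but the proposal does not overcome it.
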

\begin{proof}
For each $i \in [r]$, let $h_i$ be such that $x_i \in C_{h_i}$,
let $H_i \defas \{j \,\st\, h_{j} = h_i\}$,
let $\mathfrak{H}_i$ be the event that
\[
\bigl|e_{\Gd}(y, C^*_{h_i}) -\Expect(e_{\Gd}(y, C^*_{h_i}))\bigr| < \epsilon
\]
holds, and
let $\mathfrak{J}_i$ be the event that
$\bigwedge_{\ell \in H_i} \GG_{\epsilon, \delta}(x_\ell)$
holds.

Observe that $\{ \mathfrak{H}_k \,\st\, k \in [r] \}$ is a set of independent
events.
Also observe that for all $i \in [r]$,
the event $\mathfrak{H}_i$ is independent of
$\{ \mathfrak{J}_\ell \, \st \, \ell \in [r] \text{ and } H_\ell\neq H_i \}$.

Hence as
\begin{align*}
\textstyle
\Pr(\mathfrak{H}_i) \leq \Pr(\mathfrak{H}_i \given \mathfrak{J}_i)
\end{align*}
holds for all $i \in [r]$, we have
\begin{align*}
\textstyle
\Pr(\bigwedge_{i \in D_\delta} \mathfrak{H}_i)
\leq \Pr(\bigwedge_{i \in D_\delta} \mathfrak{H}_i \given
\bigwedge_{i \in [r]} \mathfrak{J}_i)
.
\end{align*}
But $\bigwedge_{i \in D_\delta} \mathfrak{H}_i = \GG_{\epsilon,\delta}(y)$
and
$\bigwedge_{i \in [r]} \mathfrak{J}_i = \bigwedge_{i\in[r]}
\mathfrak{G}_{\epsilon, \delta}(x_i)$,
and so we are done.
\end{proof}

As a consequence, we have the following immediate corollary.

\begin{corollary}
\label{Corollary: Prob good is at least prob good independent}
Suppose $x_1, \dots, x_r \in [n]$. Then
\[
\textstyle
\Pr\bigl(\bigwedge_{i \in [r]}\GG_{\epsilon, \delta}(x_i)\bigr) \geq
\prod_{i \in [r]} \Pr(\GG_{\epsilon, \delta}(x_i)),
\]
i.e., the probability that $r$ many elements are $(\epsilon,\delta)$-good
is at least the probability of $r$ many independent Bernoulli samples.
\end{corollary}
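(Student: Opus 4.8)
The plan is to deduce this directly from the preceding Lemma by a telescoping-product argument (equivalently, induction on $r$), using only the chain rule of conditional probability. The substantive work—showing that conditioning on a conjunction of ``good'' events can only raise the probability of another good event—has already been done in that Lemma, so the corollary is essentially bookkeeping.

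First I would write the joint probability as a product of conditionals via the chain rule,
\[
\Pr\Bigl(\bigwedge_{i \in [r]}\GG_{\epsilon, \delta}(x_i)\Bigr)
= \prod_{i=1}^{r}
\Pr\Bigl(\GG_{\epsilon, \delta}(x_i) \,\Big|\, \bigwedge_{j < i} \GG_{\epsilon, \delta}(x_j)\Bigr),
\]
where the empty conjunction (the $i=1$ factor) is read as the unconditional probability $\Pr(\GG_{\epsilon,\delta}(x_1))$. Then, for each $i$, I would invoke the preceding Lemma with $y := x_i$ and conditioning set $\{x_1,\dots,x_{i-1}\}$, which gives
\[
\Pr\Bigl(\GG_{\epsilon, \delta}(x_i) \,\Big|\, \bigwedge_{j < i} \GG_{\epsilon, \delta}(x_j)\Bigr)
\geq \Pr(\GG_{\epsilon, \delta}(x_i)).
\]
Substituting this lower bound into every factor (all factors being nonnegative) yields the claim
\[
\Pr\Bigl(\bigwedge_{i \in [r]}\GG_{\epsilon, \delta}(x_i)\Bigr)
\geq \prod_{i=1}^{r} \Pr(\GG_{\epsilon, \delta}(x_i)).
\]

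There is no real obstacle here; the only point requiring minor care is that a conditioning event $\bigwedge_{j<i}\GG_{\epsilon,\delta}(x_j)$ might have probability zero, leaving the corresponding conditional probability undefined. I would sidestep this by phrasing the argument as an induction on $r$: in the inductive step, if the conditioning event has positive probability the factorization and the Lemma apply verbatim, whereas if it has probability zero then the inductive hypothesis forces $\prod_{j<i}\Pr(\GG_{\epsilon,\delta}(x_j)) = 0$, so both sides of the target inequality vanish and it holds trivially. Either way the induction closes, giving the result.
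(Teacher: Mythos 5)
Your proposal is correct and matches the paper's intended argument: the paper states this as an ``immediate corollary'' of the preceding lemma, with precisely the chain-rule/telescoping application of that lemma being the implicit proof. Your additional handling of the zero-probability conditioning case via induction is a careful touch the paper omits, but it is the same route.
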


We now use these results along with Proposition~\ref{key-expression-prop}
to bound the probability of correctly classifying
every $(\epsilon, \delta)$-good vertex.

\begin{lemma}
\label{Lemma: Probability (epsilon, delta) good implies correctly classified}
Suppose that
\[
\textstyle
q_0-q_1 \geq 2 (\frac{2-\delta}\tau -1 + 2\epsilon + \frac{k}{n\delta\tau} ),
\]
and let $p_k \defas 1- p^k - (1-p)^k$.
Then with probability at least
\[
p_k\, \left(1 - 2 \, \exp\left\{ -\frac{\epsilon^2\, \delta^2 n}{3k}
\right\}\right)^{k^2},
\]
every $(\epsilon, \delta)$-good vertex is correctly classified.
\end{lemma}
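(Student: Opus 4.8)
The plan is to reduce this probabilistic statement to a deterministic implication supplied by Proposition~\ref{key-expression-prop}, and then separately lower bound the probability of the event that triggers that implication. Let $E$ be the event that every chosen centroid is $(\epsilon,\delta)$-good and that at least one centroid lies in $[n]_A$ and at least one in $[n]_B$. The proof then splits into two parts: (a) showing that, under the hypothesis $q_0-q_1 \ge 2(\frac{2-\delta}{\tau}-1+2\epsilon+\frac{k}{n\delta\tau})$, the event $E$ forces every $(\epsilon,\delta)$-good vertex to be correctly classified by the new partition; and (b) showing $\Pr(E) \ge p_k\,(1-2\exp\{-\epsilon^2\delta^2 n/(3k)\})^{k^2}$.

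For part (a), I would invoke Proposition~\ref{key-expression-prop} directly: on $E$ all of its hypotheses hold, so the weighted edge-density vector of each $(\epsilon,\delta)$-good vertex of $[n]_A$ is closer in $L^1$ to that of some centroid in $[n]_A$ than to that of any centroid in $[n]_B$, and symmetrically for $[n]_B$. Hence every good vertex of $[n]_A$ is assigned to a class whose centroid lies in $[n]_A$, and every good vertex of $[n]_B$ to a class whose centroid lies in $[n]_B$. Consequently no class built around an $A$-centroid receives a good vertex of $[n]_B$, and vice versa, so within each new class all the good vertices come from a single block. This identifies the majority of each new class with the block of its (good) centroid, so each good vertex lies in the majority of its class and is therefore correctly classified.

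For part (b), I would factor $E$ as the intersection of $E_{\mathrm{span}}$ (the centroids meet both blocks) and $E_{\mathrm{good}}$ (all centroids are $(\epsilon,\delta)$-good). Since the $k$ centroids are distinct vertices and the latent $U_i$ are i.i.d., the block memberships of the centroids are i.i.d.\ $\Bernoulli(p)$, so $\Pr(E_{\mathrm{span}}) = 1-p^k-(1-p)^k = p_k$. For $E_{\mathrm{good}}$, Lemma~\ref{Lemma: Lower bound prob good} gives $\Pr(\GG_{\epsilon,\delta}(x)) \ge (1-2\exp\{-\epsilon^2\delta^2 n/(3k)\})^k$ for each centroid $x$, and Corollary~\ref{Corollary: Prob good is at least prob good independent} bounds the probability that all $k$ centroids are simultaneously good below by the product of these, namely $(1-2\exp\{-\epsilon^2\delta^2 n/(3k)\})^{k^2}$. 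To multiply the two factors I would condition on the block assignment and the centroid set: the goodness bound from Lemma~\ref{Lemma: Lower bound prob good} and Corollary~\ref{Corollary: Prob good is at least prob good independent} holds uniformly over such conditionings, since it never uses which block a centroid occupies, so it survives conditioning on $E_{\mathrm{span}}$, giving $\Pr(E) = \Pr(E_{\mathrm{span}})\,\Pr(E_{\mathrm{good}}\mid E_{\mathrm{span}}) \ge p_k\,(1-2\exp\{-\epsilon^2\delta^2 n/(3k)\})^{k^2}$.

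The main obstacle is the final inference in part (a): passing from ``assigned to a class whose centroid is in the correct block'' to ``correctly classified,'' which by definition requires lying in the \emph{majority} of the resulting class. This needs the few bad vertices to be unable to overturn the block determined by the good vertices of any class, i.e.\ that the good vertices dominate each new class, which is exactly the regime furnished by the surrounding hypotheses $\tau > 1-\frac{1}{4k}$ and $\delta$ near $1$. A secondary point requiring care is that the factorization in part (b) is legitimate despite the dependence between $E_{\mathrm{span}}$ and $E_{\mathrm{good}}$; this is resolved precisely because the per-centroid goodness bound is insensitive to block membership and so passes through the conditioning unchanged.
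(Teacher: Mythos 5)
Your proposal follows the paper's own proof essentially step for step: the paper likewise invokes Proposition~\ref{key-expression-prop} for the deterministic implication, combines Lemma~\ref{Lemma: Lower bound prob good} with Corollary~\ref{Corollary: Prob good is at least prob good independent} to get the factor $\left(1 - 2\exp\{-\epsilon^2\delta^2 n/(3k)\}\right)^{k^2}$ for all $k$ centroids being $(\epsilon,\delta)$-good, and multiplies by $p_k$ for the event that the centroids meet both blocks. The two subtleties you single out---passing from ``assigned to a class whose centroid is in the correct block'' to ``correctly classified'' in the majority sense, and justifying the product of the two probability factors despite possible dependence---are both passed over silently in the paper's proof, so your treatment is, if anything, more careful than the original.
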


\begin{proof}
By Corollary~\ref{Corollary: Prob good is at least prob good independent}
and Lemma~\ref{Lemma: Lower bound prob good}, the probability that
all $k$ of the centroids are $(\epsilon, \delta)$-good is at least
\[
\prod_{i \in [k]} \Pr(\GG_{\epsilon, \delta}(x_i)) \geq
\left(1 - 2 \, \exp\left\{ -\frac{\epsilon^2\, \delta^2 n}{3k} \right\}\right)^{k^2}.
\]

Further, $p_k$ is the probability that at least one centroid is in $[n]_A$ and
least one centroid is in $[n]_B$.
Therefore with probability at least
\[
p_k \, \left(1 - 2 \, \exp\left\{ -\frac{\epsilon^2\, \delta^2
n}{3k}\right\}\right)^{k^2},
\]
the conditions of Proposition~\ref{key-expression-prop} hold, and
every $(\epsilon, \delta)$-good vertex is correctly classified.
\end{proof}

Using
Corollary~\ref{Corollary: Prob good is at least prob good independent} and
Lemma~\ref{Lemma: Lower bound prob good},
we may also show that, with high probability, a large number of vertices are
$(\epsilon,\delta)$-good.

\begin{corollary}
\label{Corollary: Bound most elements are (epsilon, delta) good}
For $\xi > 0$
we have the following bound:
\[\Pr\bigl(|\{y: \GG_{\epsilon, \delta}(y)\}| \geq n (\alpha - \xi)\bigr) \ \geq
\ 1 - 2 \, \exp\left\{ -\frac{\xi^2\, n}{3} \right\},\]
where
$\alpha = \left(1 - 2 \, \exp\left\{ -\frac{\epsilon^2\, \delta^2 n}{3k} \right\}\right)^k$.
\end{corollary}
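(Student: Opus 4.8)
The plan is to reduce the number of $(\epsilon,\delta)$-good vertices to a binomial random variable and then invoke the Chernoff bound for $\Bin(n,\zeta,\epsilon)$ recorded above. Write $N \defas \bigl|\{y : \GG_{\epsilon,\delta}(y)\}\bigr| = \sum_{y\in[n]} \mathbf 1[\GG_{\epsilon,\delta}(y)]$ for the count of good vertices. By Lemma~\ref{Lemma: Lower bound prob good}, each summand is the indicator of an event of probability at least $\alpha = \bigl(1 - 2\exp\{-\epsilon^2\delta^2 n/(3k)\}\bigr)^k$, so $\Expect[N] \ge n\alpha$. Since the target bound has precisely the shape of the Chernoff inequality with parameter $\zeta=\alpha\le 1$, the whole argument comes down to showing that $N$ falls below $n(\alpha-\xi)$ no more often than a $\Binomial(n,\alpha)$ variable does.

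Granting that comparison, the conclusion follows by a short computation. Writing $X_{n,\alpha}\sim\tfrac1n\Binomial(n,\alpha)$, stochastic domination of $N/n$ over $X_{n,\alpha}$ gives
\[
\Pr\bigl(N \ge n(\alpha-\xi)\bigr) \ \ge\ \Pr\bigl(X_{n,\alpha}\ge\alpha-\xi\bigr) \ \ge\ \Pr\bigl(\bigl|X_{n,\alpha}-\alpha\bigr|<\xi\bigr) \ =\ \Bin(n,\alpha,\xi),
\]
and the Chernoff bound on $\Bin$ together with $\alpha\le 1$ yields
\[
\Bin(n,\alpha,\xi) \ \ge\ 1 - 2\exp\Bigl\{-\tfrac{\xi^2 n}{3\alpha}\Bigr\} \ \ge\ 1 - 2\exp\Bigl\{-\tfrac{\xi^2 n}{3}\Bigr\},
\]
which is exactly the claimed inequality.

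The hard part will be justifying the stochastic domination, and here Corollary~\ref{Corollary: Prob good is at least prob good independent} is the natural, but delicate, input. That corollary says the goodness events are positively associated --- any collection is jointly good at least as often as an independent family of Bernoulli($\ge\alpha$) trials --- yet positive correlation on its own tends to \emph{fatten} the lower tail of a sum rather than suppress it, so one cannot simply read domination of the lower tail off association. What rescues the comparison is that the dependence is weak: $\GG_{\epsilon,\delta}(y)$ and $\GG_{\epsilon,\delta}(y')$ are determined by the edges incident to $y$ and to $y'$ in $\Gd$, and these two stars meet only in the single edge $\{y,y'\}$, whose influence on any density $e_\Gd(\cdot,C^*_i)$ is $O(k/n)$. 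I would therefore make the reduction rigorous by exposing the vertices one at a time and tracking $\Expect[N\mid\mathcal F_t]$ along the resulting martingale, controlling it with a variance-sensitive concentration inequality (Bernstein or Freedman type) rather than plain Azuma --- whose worst-case $O(1)$ per-edge differences summed over $\Theta(n^2)$ edges are far too lossy --- so as to recover an exponential bound of the stated order. Checking that the accumulated conditional variance is only $O(n)$, which is what converts weak pairwise dependence into the $\exp\{-\Theta(\xi^2 n)\}$ rate matching the binomial comparison, is where essentially all of the work lies.
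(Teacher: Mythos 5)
Your opening reduction and the closing Chernoff manipulation coincide with the paper's: both pass from the count $N \defas |\{y : \GG_{\epsilon,\delta}(y)\}|$ to the binomial quantity $\Bin(n,\alpha,\xi)$ and then use $\Bin(n,\alpha,\xi) \ge 1 - 2\exp\{-\xi^2 n/(3\alpha)\} \ge 1 - 2\exp\{-\xi^2 n/3\}$ since $\alpha \le 1$. The problem is the middle step. You correctly identify that the whole burden is the comparison $\Pr(N \ge n(\alpha-\xi)) \ge \Pr(\tfrac1n\Binomial(n,\alpha) \ge \alpha-\xi)$, and you correctly observe that positive association alone cannot deliver it: if all the events $\GG_{\epsilon,\delta}(y)$ were identical, each of probability $\alpha$, then $\Pr\bigl(\bigwedge_i \GG_{\epsilon,\delta}(x_i)\bigr) = \alpha \ge \alpha^r$ so the conclusion of Corollary~\ref{Corollary: Prob good is at least prob good independent} holds, yet $N \in \{0,n\}$ and $\Pr(N \ge n(\alpha-\xi)) = \alpha$, which no bound of the form $1-2\exp\{-\xi^2 n/3\}$ can match for large $n$. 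But having refused that shortcut, you do not replace it with a proof: the vertex-exposure martingale with a Freedman-type variance bound is only a plan, and you yourself flag that verifying the $O(n)$ accumulated conditional variance ``is where essentially all of the work lies.'' That verification is not routine --- revealing one vertex's edges can in the worst case flip $\GG_{\epsilon,\delta}(y')$ for many other vertices $y'$ simultaneously (whenever many densities sit within $O(k/n)$ of the threshold $\epsilon$), so the increments are not worst-case bounded, and one needs a genuinely quantitative ``typical bounded differences'' argument that you have not supplied. As it stands, the proposal has a gap exactly at the step the corollary exists to establish.

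What makes this worth dwelling on is that the step you decline to take on faith is precisely the step the paper takes on faith. The paper's proof consists of (i) $\Pr(\GG_{\epsilon,\delta}(y)) \ge \alpha$ by Lemma~\ref{Lemma: Lower bound prob good}, (ii) the assertion that $\Pr(N \ge n(\alpha-\xi)) \ge \Bin(n,\alpha,\xi)$ ``by Corollary~\ref{Corollary: Prob good is at least prob good independent},'' and (iii) the same Chernoff computation you perform. Step (ii) is exactly the association-to-lower-tail inference your counterexample rules out in general; what the corollary literally yields is only $\Pr(N \ge m) \ge \alpha^m$, which is exponentially weak, and the paper supplies no further argument exploiting the structure that could legitimize the comparison (namely, that each $\GG_{\epsilon,\delta}(y)$ is a function of the edge star at $y$, and distinct stars overlap in at most the single edge $\{y,y'\}$). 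So your critique is sound and in fact applies to the paper's own proof; your proposed martingale repair is the right kind of idea for exploiting the near-disjointness of the stars, but until the variance control is actually carried out, your attempt replaces an unjustified step with an unexecuted one.
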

\begin{proof}
First observe that
$\Pr(\GG_{\epsilon, \delta}(x)) \geq \alpha$ for all $x \in [n]$,
by Lemma~\ref{Lemma: Lower bound prob good},
and recall that $\Bin(n, \alpha, \xi)$ is the probability that the average of
$n$ Bernoulli random variables with weight $\alpha$ is within $\xi$ of its expected value.
Hence
\[
\Pr\bigl(|\{y: \GG_{\epsilon, \delta}(y)\}| \geq n (\alpha - \xi)\bigr) \ \geq \ \Bin(n, \alpha, \xi),
\]
by
Corollary~\ref{Corollary: Prob good is at least prob good independent}.
As before, we have
\[
\Bin(n, \alpha, \xi) \geq  1 - 2 \, \exp\left\{ -\frac{\xi^2\, n}{3\alpha}
\right\}.
\]
Hence
\[
\Bin(n, \alpha, \xi) > 1 - 2 \, \exp\left\{ -\frac{\xi^2\, n}{3}\right\},
\]
and so the result follows.
\end{proof}

In particular, with probability at least $1 - 2 \, \exp\left\{ -\frac{\xi^2\, n}{3\alpha} \right\}$, at least an $\alpha-\xi$ fraction of vertices are correctly classified.

Finally, we put all of these calculations together to obtain the following theorem.

\begin{theorem}
\label{Theorem: Main theorem}
Suppose $\tau > 1- \frac{1}{4k}$  and $\{C_1, \dots, C_k\}$ is a partition
of the vertices of $G$ that correctly classifies at least $\tau n$ many vertices.
If $\tau' > \tau$, then for every $\epsilon > 0$ and every $\xi > 0$ such that
\begin{equation*}
\label{thm-eq1}
\tag{$*$}
q_0-q_1 \geq \frac{3 - (1 - 4k(1-\tau))^{\frac{1}{2}}}{\tau} - 2 + 4 \epsilon
+ \frac{ 1 - (1 - 4k(1-\tau))^{ \frac{1}{2} } }{ n \tau (1 - \tau) }
\end{equation*}
and
\[
\label{thm-eq2}
\tag{$\dagger$}
\epsilon^2\, n  > -12k \, \log\Bigl(\dfrac{1-(\tau' + \xi)^{\frac{1}{k}}}{2}\Bigr),
\]
the partition obtained by applying (this variant of) ISFE correctly classifies at least $\tau' n$ many vertices with probability at least
\[
p_k
 \left(1 - 2 \, \exp\left\{ -\frac{\epsilon^2\,n}{12k}\right\}\right)^{k^2} \left(1 - 2 \, \exp\left \{ -\frac{\xi^2\, n}{3} \right \} \right),
\]
where $p_k = 1- p^k - (1-p)^k$.
\end{theorem}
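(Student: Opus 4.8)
The plan is to assemble the preceding results by committing to a single, carefully chosen value of the slack parameter $\delta$ and then checking that the two hypotheses $(*)$ and $(\dagger)$ are exactly what is needed to drive Proposition~\ref{key-expression-prop}, Lemma~\ref{Lemma: Probability (epsilon, delta) good implies correctly classified}, and Corollary~\ref{Corollary: Bound most elements are (epsilon, delta) good}. The key move is to set
\[
\delta \defas \frac{1 + \sqrt{1 - 4k(1-\tau)}}{2},
\]
which is well-defined precisely because the hypothesis $\tau > 1 - \frac{1}{4k}$ makes the radicand nonnegative. This $\delta$ is the larger root of $\delta - \delta^2 = k(1-\tau)$, so it meets the hypothesis of Lemma~\ref{Lemma: when always delta-good} with equality; consequently every $\delta$-large class is automatically $\delta$-good, so that $D_\delta$ consists of all $\delta$-large classes and the $(\epsilon,\delta)$-good machinery applies to $\{C_1,\dots,C_k\}$ with no further assumptions on the $C_i$.

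First I would verify that $(*)$ is nothing but the hypothesis of Proposition~\ref{key-expression-prop} after this substitution. Writing $s \defas \sqrt{1-4k(1-\tau)}$ and using the identity $4k(1-\tau) = (1-s)(1+s)$, the term $\frac{2k}{n\delta\tau}$ collapses to $\frac{1-s}{n\tau(1-\tau)}$, while $2(\frac{2-\delta}{\tau} - 1)$ becomes $\frac{3-s}{\tau} - 2$; matching these against $(*)$ shows the two conditions coincide. Hence, whenever all centroids are $(\epsilon,\delta)$-good with at least one centroid from each block, Proposition~\ref{key-expression-prop} guarantees that every $(\epsilon,\delta)$-good vertex is assigned to a centroid from its own block, and is therefore correctly classified.

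Second, I would translate $(\dagger)$ into a statement about the guaranteed fraction of good vertices. With $\alpha \defas (1 - 2\exp\{-\epsilon^2\delta^2 n/(3k)\})^{k}$ as in Corollary~\ref{Corollary: Bound most elements are (epsilon, delta) good}, a routine manipulation (exponentiate, take a $k$-th root, take a logarithm) shows that $\alpha \ge \tau' + \xi$ is equivalent to $\epsilon^2 \delta^2 n \ge -3k \log\bigl(\tfrac{1 - (\tau'+\xi)^{1/k}}{2}\bigr)$. Since $\delta \ge \tfrac12$ gives $\delta^2 \ge \tfrac14$, condition $(\dagger)$ — which carries the factor $12k = 4\cdot 3k$ — implies this inequality, so that $n(\alpha - \xi) \ge \tau' n$. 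The same bound $\delta^2 \ge \tfrac14$ lets me replace every occurrence of $\frac{\epsilon^2\delta^2 n}{3k}$ in the conclusions of Lemma~\ref{Lemma: Probability (epsilon, delta) good implies correctly classified} and Corollary~\ref{Corollary: Bound most elements are (epsilon, delta) good} by the weaker $\frac{\epsilon^2 n}{12k}$, recovering the exponents appearing in the theorem.

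Finally, I would intersect the two high-probability events. Let $E_1$ be the event that every $(\epsilon,\delta)$-good vertex is correctly classified, and let $E_2$ be the event that at least $n(\alpha - \xi)$ vertices are $(\epsilon,\delta)$-good. On $E_1 \cap E_2$ at least $n(\alpha-\xi) \ge \tau' n$ vertices are correctly classified, which is the desired conclusion. Lemma~\ref{Lemma: Probability (epsilon, delta) good implies correctly classified} bounds $\Pr(E_1) \ge p_k (1 - 2\exp\{-\epsilon^2 n/(12k)\})^{k^2}$, and Corollary~\ref{Corollary: Bound most elements are (epsilon, delta) good} bounds $\Pr(E_2) \ge 1 - 2\exp\{-\xi^2 n/3\}$; multiplying these lower bounds gives exactly the claimed probability. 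The step demanding the most care is this last multiplicative combination: $E_1$ and $E_2$ are both monotone in the collection of $(\epsilon,\delta)$-good indicators and share the graph's randomness, so the product bound is not automatic. I expect the cleanest justification to reuse the positive-association property already established in the lemma preceding Corollary~\ref{Corollary: Prob good is at least prob good independent} — conditioning on a set of vertices being good can only raise the probability that further vertices are good — so that $\Pr(E_1 \cap E_2) \ge \Pr(E_1)\Pr(E_2)$.
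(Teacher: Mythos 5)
Your proposal is correct and follows essentially the same route as the paper's proof: the same choice $\delta = \frac{1 + (1-4k(1-\tau))^{1/2}}{2}$ satisfying $\delta - \delta^2 = k(1-\tau)$, the same algebraic identification of $(*)$ with the hypothesis of Proposition~\ref{key-expression-prop}, the same use of $\delta > \frac12$ to pass from exponent $\frac{\epsilon^2\delta^2 n}{3k}$ to $\frac{\epsilon^2 n}{12k}$ and to derive $\alpha > \tau' + \xi$ from $(\dagger)$, and the same final product of the two probability bounds. Your closing remark about why $\Pr(E_1 \cap E_2) \geq \Pr(E_1)\Pr(E_2)$ needs the positive-association lemma is in fact a point the paper glosses over by multiplying the bounds without comment, so your treatment is, if anything, slightly more careful on that step.
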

\begin{proof}
Let $\delta = \frac{1 + (1 - 4k(1-\tau))^{\frac{1}{2}}}{2}$ and notice that
$\delta - \delta^2 =  k(1- \tau)$.
We can then apply
Lemma~\ref{Lemma: when always delta-good} to conclude that
whenever $|C_i| \geq \delta \frac{n}{k}$ holds, $C_i$ is $\delta$-good.
The inequality \eqref{thm-eq1} is equivalent to
\[
q_0-q_1 \geq 2 \frac{2-\delta}\tau - 2 + 4\epsilon + 2\frac{k}{n\delta\tau}.
\]
Therefore the hypothesis of
Lemma~\ref{Lemma: Probability (epsilon, delta) good implies correctly classified}
is satisfied, and so with probability at least
\[
p_k
 \left(1 - 2 \, \exp\left \{ -\frac{\epsilon^2\, \delta^2 n}{3k}
 \right\}\right)^{k^2},
\]
every $(\epsilon, \delta)$-good vertex is correctly classified.
Because $\tau > 1 - \frac{1}{4k}$, we have $\delta > \frac{1}{2}$.
Hence the probability of correct classification is at least
\begin{align*}
p_k \, \left(1 - 2 \, \exp \left \{ -\frac{\epsilon^2\,n}{12k} \right
\}\right)^{k^2}.
\end{align*}

By the condition
\eqref{thm-eq2}
and because $\delta > \frac{1}{2}$,
we have
\[
\epsilon^2 \delta^2\, n > \frac{1}{4}\epsilon^2 n > -3k \, \log\Bigl(\frac{1-{(\tau' + \xi)}^{\frac{1}{k}}}{2}\Bigr).
\]
Rearranging this inequality, we obtain
\[
\alpha = \left(1 - 2 \, \exp\left\{ -\frac{\epsilon^2\, \delta^2 n}{3k} \right\}\right)^k > \tau' +\xi.
\]
Applying Corollary \ref{Corollary: Bound most elements are (epsilon, delta) good} we see that with probability at least $1 - 2 \, \exp\left\{ -\frac{\xi^2\, n}{3} \right\}$ at least $n (\tau' + \xi)$ vertices are $(\epsilon, \delta)$-good.

Hence we know that at least $n \tau'$ many vertices are correctly
classified after applying our variant of ISFE, with probability at least
\[
p_k
 \left(1 - 2 \,\exp\left\{ -\frac{\epsilon^2\,n}{12k} \right\}\right)^{k^2}  \left(1 - 2 \, \exp\left\{ -\frac{\xi^2\, n}{3} \right\} \right),
\]
as desired.
\end{proof}

For this variant of ISFE,
Theorem~\ref{Theorem: Main theorem}
describes certain aspects of its
long-term behavior, i.e., as the size of the graph $n$ tends to infinity.

In particular, suppose we fix $k$ and consider those values of $\tau'$
to which Theorem \ref{Theorem: Main theorem} applies, i.e.,
the improvement in the fraction of correctly classified vertices that can be
obtained after applying ISFE.
Note that as $n$ approaches infinity, not only can we find values for $\epsilon$
and $\xi$ such that $\tau'$ becomes arbitrarily close to 1,
but also the probability of ISFE correctly classifying at least
a $\tau'$ fraction of vertices is bounded by $p_k = 1-p^k - (1-p)^k$, which is
the probability that at least one centroid is in $[n]_A$ and at least one
centroid is in $[n]_B$. Finally, letting $k$ vary again, note that
the probability $p_k \rightarrow 1$ as $k \rightarrow \infty$.

Thus, we show in Theorem~\ref{Theorem: Main theorem} that in the limit, where
both the size $n$ of the graph and number of classes $k$ in the partition
approach infinity (in an appropriate relationship),
with high probability this variant of ISFE
will correctly classify an arbitrarily large fraction of the vertices of a $2$-step SBM (if started with an initial partition that correctly classifies enough vertices).

\section{Real-world datasets}
\label{appendix-data}

\begin{figure*}[!htb]
\centering
\begin{center}
\begin{subfigure}[b]{\linewidth}
    \centering
    \includegraphics[scale=.25]{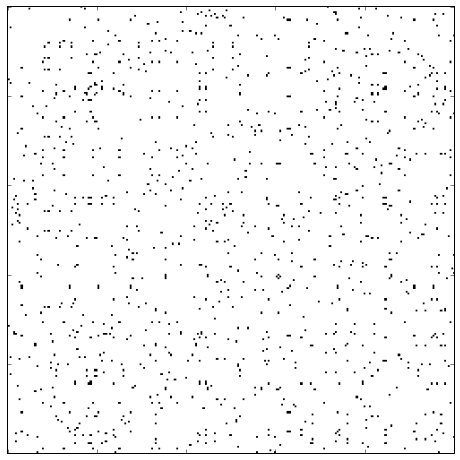}
    \includegraphics[scale=.25]{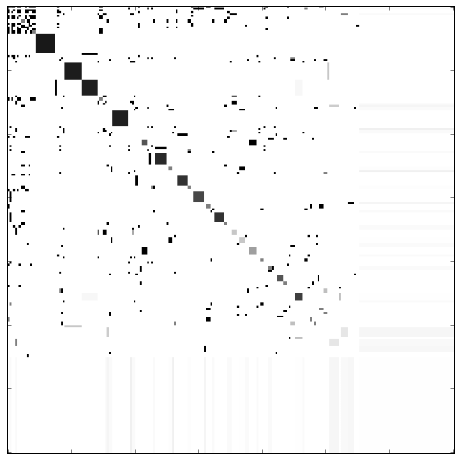}
    \includegraphics[scale=.25]{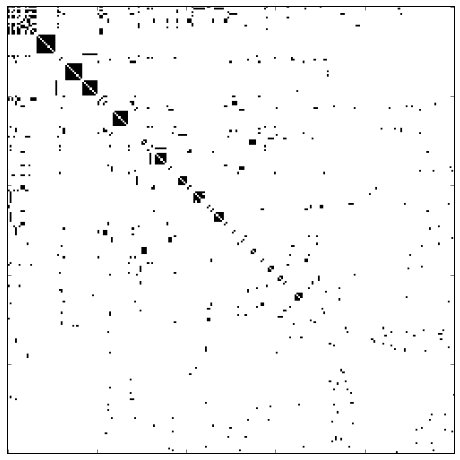}
    \caption{NIPS co-authorship}
\end{subfigure}
\begin{subfigure}[b]{\linewidth}
    \centering
    \includegraphics[scale=.25]{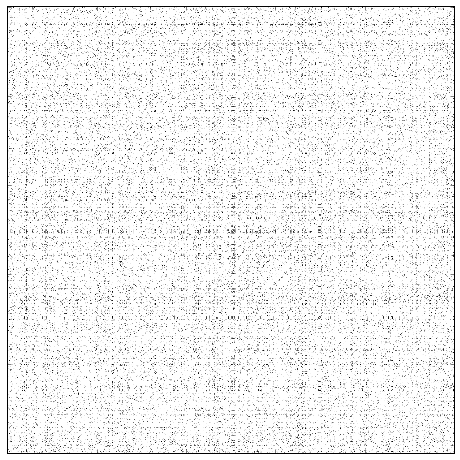}
    \includegraphics[scale=.25]{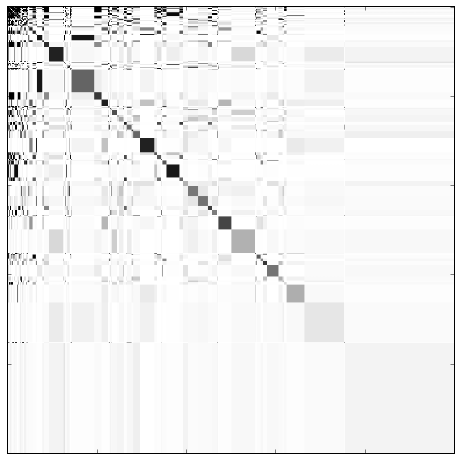}
    \includegraphics[scale=.25]{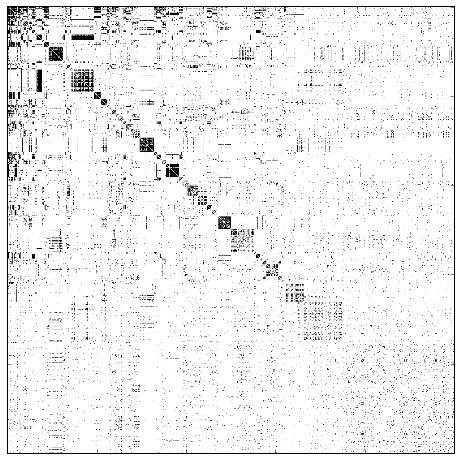}
\caption{ca-AstroPh}
\end{subfigure}
\begin{subfigure}[b]{\linewidth}
    \centering
     \includegraphics[scale=.25]{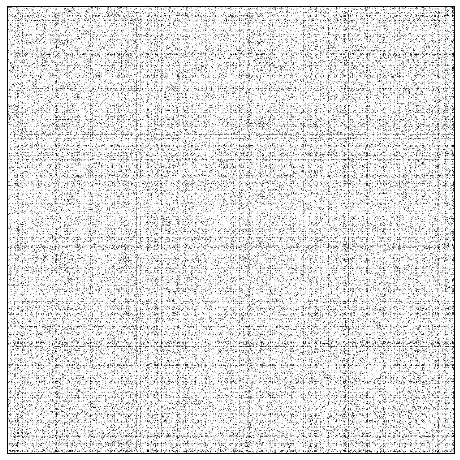}
     \includegraphics[scale=.25]{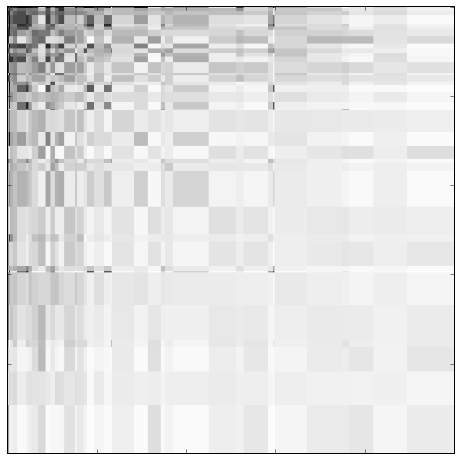}
     \includegraphics[scale=.25]{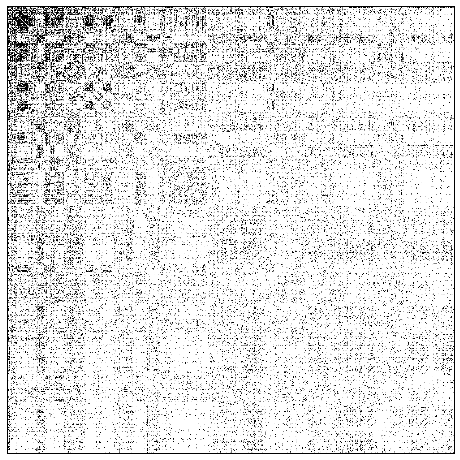}
    \caption{soc-Epinions1}
\end{subfigure}
\end{center}
\caption{ISFE results on real-world datasets for NIPS co-authorship, Astrophysics arXiv co-authorship, and epinions trust network. Columns: (1) A denser subset of
    the original graph; (2) estimated ISFE graphon;
    (3) adjacency matrix rearranged according to ISFE estimate.
}
\label{real-fig}
\end{figure*}

We examine three real-world social network datasets, considering a denser subgraph
constructed by taking the top $K$ highest-degree vertices and the edges between them,
for reasons we describe below.
We randomized the order of the vertices for each graph before running the ISFE
algorithm, which we present in Figure~\ref{real-fig}.

Many real-world networks, such as those arising from co-authorship, social
interactions, etc., are not well-modeled as exchangeable graphs,  as they tend
to exhibit power-law degree distributions, ``small-world'' phenomena such as
short path lengths, and other properties that generally hold only for sparse
sequences of graphs (having $o(n^2)$ edges among $n$ vertices, which is not
possible for non-empty exchangeable graphs). For a detailed discussion, see
\citet[\S VII]{DBLP:journals/pami/OrbR14}.

One approach to modeling sparse graphs using graphons is the
Bollob\'as--Janson--Riordan model
\citep{MR2337396}, where edges are independently
deleted from an exchangeable graph to achieve the desired edge density.
Although this process does not exhibit many of the above real-world phenomena
\citep[Example~VII.4]{DBLP:journals/pami/OrbR14}, the behavior of graphon
estimators on graphs sampled in this way has been considered
\citep{MR2906868, 2013arXiv1309.5936W}.

Here we avoid these issues to some degree by considering a denser subset of the original graph.

\begin{enumerate}
\item NIPS co-authorship dataset \citep{chechik2007eec}:
This dataset is an undirected network of co-authorships in the NIPS conference
from Proceedings 1--12, with 2,037 vertices and 1,740 edges.
We choose $K=234$ for the denser subset, which has been studied in other work
\citep{DBLP:conf/nips/MillerGJ09, DBLP:conf/icml/PallaKG12}.
For the ISFE parameters, we set $T=8, \ell=95$, initializing it with a $90$
cluster $k$-means partition.

\item ca-AstroPh co-authorship dataset \citep{Newman16012001}:
This dataset is an undirected network of co-authorships between scientists posting
pre-prints on the Astrophysics E-Print Archive between Jan 1, 1995 and December
31, 1999 with 18,772 vertices and 396,160 edges.
We choose $K=1000$, and
set the ISFE parameters to $T=8$, $\ell=160$, initializing it with a $150$ cluster
partition from $k$-means.

\item Epinions dataset \citep{pedro}:
This dataset is a who-trusts-whom network of Epinions.com with
75,879 vertices, 508,837 edges.
We work with the undirected graph obtained by symmetrizing the original undirected graph,
choose $K=1000$, and set $T=8$, $\ell=40$, initializing it with a $35$ cluster
partition from $k$-means.
\end{enumerate}

\end{appendix}

\end{document}